\theoremstyle{plain}
\newtheorem{theorem}{Theorem}[section]
\newtheorem{lemma}[theorem]{Lemma}
\newtheorem{proposition}[theorem]{Proposition}
\theoremstyle{definition}
\newtheorem{example}[theorem]{Example}
\newtheorem{remark}[theorem]{Remark}
\def\fix{\mathrm{fix}}
\def\aut#1{\mathrm{Aut}(#1)}
\def\lmlt#1{\mathrm{LMlt}(#1)}
\def\dis#1{\mathrm{Dis}(#1)}
\def\cjg{\mathrm{Cjg}}
\def\aff{\mathrm{Aff}}
\def\ldiv{\backslash}
\def\Z{\mathbb Z}
\def\F{\mathbb F}
\title{Primitive quandles with alternating displacement group}
\author{Milan Cvr\v cek} 
\address[Cvr\v cek]{Department of Mathematics, Faculty of Science, Humanities and Education, Technical University of Liberec, Czech Republic}
\email{milan.cvrcek@tul.cz}
\author{David Stanovsk\'y}
\address[Stanovsk\'y]{Department of Algebra, Faculty of Mathematics and Physics, Charles University, Prague, Czech Republic}
\email{stanovsk@karlin.mff.cuni.cz}
\begin{document}


\keywords{Simple quandles, primitive quandles, primitive permutation groups.}

\subjclass[2020]{57K12, 20N02, 20B15}
\date{\today}

\begin{abstract}
We classify primitive quandles with alternating displacement group. All of them are conjugation quandles, and the following is a complete list of the underlying conjugacy classes:  
transpositions in $S_n$ for $n=3$ and $n\geq5$; fixpoint-free involutions in $S_n$ for $n\equiv 2\pmod 4$, $n\geq6$; fixpoint-free involutions in $A_n$ for $n\equiv 0\pmod 4$, $n\geq12$; and one exceptional conjugacy class of size 36 in a group of order 720.
\end{abstract}

\maketitle

\section{Introduction}\label{sec:intro}

The theory of quandles was originally motivated by knot theory: the knot quandle is a classifying invariant, and quandle coloring provides computationally feasible invariants \cite{EN}. Quandle coloring motivated various classification projects, including representation and enumeration of small connected quandles \cite{HSV}, isomorphism theorem for affine quandles \cite{Hou}, or classification of finite simple quandles \cite{AG,J-simple}. Both simple and affine quandles are important building stones of general quandles \cite{BS}.

Many properties of quandles are reflected in the permutation group generated by their translations, called the \emph{multiplication group} (or \emph{inner mapping group}, in some papers), and in its subgroup called \emph{displacement group} (see Section \ref{sec:prelim} for precise definitions). A quandle is called \emph{connected} if its multiplication group acts transitively. A connected quandle is affine if and only if its displacement group is abelian. Simple quandles are connected and they are characterized by the fact that every normal subgroup of the multiplication group contains the displacement group. In the finite case, the displacement group is necessarily a power of a finite simple group, which is the main parameter in the classification. 

Connected quandles are homogeneous, because translations are automorphisms. It is therefore natural to ask for classification of quandles with a high degree of symmetry. The problem can be traced back to the beginnings of quandle theory (even before the term ``quandle" was coined by Joyce), see \cite{Nag, Nob}. 

Quandles with a primitive multiplication group, called shortly \emph{primitive quandles}, are simple. Therefore, the classification of finite simple quandles can serve as a basis for classification of finite quandles with a high degree of transitivity.

Finite quandles with doubly transitive multiplication group were classified by Vendramin in \cite{V}: they are precisely the affine quandles $\mathrm{Aff}(\F_q,\alpha)$, where $\alpha$ is a primitive element in $\F_q$. The key step, using deep group theoretic results, is the proof that doubly transitive quandles are affine, thus reducing the problem to a special case solved by Wada \cite{Wada}. Vendramin also explicitly states the problem to classify finite primitive quandles, although implicitly, the problem was discussed already in 1970s \cite{Nag}.

The affine case is easy to solve: an affine quandle is primitive if and only if it is simple. This seems to be a known result, but we could not find an explicit proof in literature, so we include it in Section \ref{sec:affine}. 
Very recently, Bonatto \cite{Bon} classified primitive quandles where the displacement group is a nontrivial power of a finite simple non-abelian group, i.e., $L^t$ with $t>1$, see Theorem \ref{thm:primitive t>1}. In the present paper, we start a systematic approach to the classification of the remaining case, $t=1$. 

Our primary goal is the classification of primitive quandles with alternating displacement group. In this case, the multiplication group is either $A_n$, or $S_n$, or one of the two exceptional groups $A_6\rtimes\Z_2$ resulting from outer automorphisms of $S_6$. The quandles can be desribed as a conjugacy class in their multiplication group. Here we state our main result, the notation will be explained in Section \ref{sec:prelim}.

\begin{theorem} \label{thm:main}
Let $Q$ be a finite quandle with $\mathrm{Dis}(Q)\simeq A_n$, $n\geq3$. Then $Q$ is primitive if and only if one of the following holds:
    \begin{enumerate}        
         \item $Q \simeq \mathrm{Cjg}(S_n,(1,2))$ where $n=3$ or $n\geq5$;
         \item $Q \simeq \mathrm{Cjg}(S_n,(1,2)(3,4)\dots(n-1,n))$ where $n\equiv 2\pmod 4$, $n\geq6$;
         \item $Q \simeq \mathrm{Cjg}(A_n,(1,2)(3,4)\dots(n-1,n))$ where $n\equiv 0\pmod 4$, $n\geq12$;
         \item $Q \simeq \mathrm{Cjg}(G,e)$ where $G=A_6\rtimes\Z_2$ with action by an outer automorphism and
				$e^G$ is the unique conjugacy class with 36 elements ($G=$ {\tt SmallGroup(720,764)} in GAP \cite{GAP}).  
      \end{enumerate}
\end{theorem}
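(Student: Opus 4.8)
The plan is to realize $Q$ through its canonical \emph{quandle envelope}. Since a primitive permutation group is transitive, $Q$ is connected, and fixing a base point $e$ we identify $Q$ with the coset space $G/H$, where $G=\lmlt{Q}$, $H=G_e$ is the point stabilizer, and $\zeta=L_e$. The standard relations $gL_xg^{-1}=L_{g(x)}$ give $\zeta\in Z(H)$ and $G=\langle\zeta^G\rangle$, while the operation is recovered as $aH*bH=a\zeta a^{-1}bH$. The subgroup $\dis{Q}$ is generated by $\{a\zeta a^{-1}\zeta^{-1}:a\in G\}$, and $G/\dis{Q}$ is cyclic, generated by $\zeta\,\dis{Q}$. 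First I would dispose of the degenerate ranges $n\le4$ by hand: the case $n=3$ falls under the affine classification of Section~\ref{sec:affine} (here $\dis{Q}\simeq\Z_3$ is abelian), and for $n=4$ one checks directly that no quandle with $\dis{Q}\simeq A_4$ is primitive, $A_4$ not being simple. For $n\ge5$ the group $A_n$ is simple, so $G$ is almost simple with socle $A_n$ and cyclic $G/A_n$; hence $G\in\{A_n,S_n\}$ when $n\ne6$, and $G\in\{A_6,S_6,\mathrm{PGL}_2(9),M_{10}\}$ when $n=6$, these being exactly the overgroups of $A_6$ in $\aut{A_6}$ with cyclic quotient.

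The central observation is that primitivity collapses the envelope to a conjugation quandle. Primitivity of the $G$-action means $H$ is maximal in $G$. From $\zeta\in Z(H)$ we get $\langle\zeta\rangle\le H\le C_G(\zeta)$, and $\zeta\ne1$ together with $G=\langle\zeta^G\rangle$ nonabelian forces $C_G(\zeta)\ne G$. Thus $H\le C_G(\zeta)\lneq G$ with $H$ maximal, which leaves only $H=C_G(\zeta)$. Consequently $Q\simeq\cjg(G,\zeta)$ with $\zeta$ running over a single conjugacy class whose centralizer $C_G(\zeta)$ is a maximal subgroup of $G$; this already explains why every primitive quandle in range is a conjugation quandle.

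It then remains to list, for each admissible $G$, the conjugacy classes with maximal centralizer subject to the two constraints $G=\langle\zeta^G\rangle$ (so that $\lmlt{Q}=G$ and the conjugation action is faithful) and $\dis{Q}\simeq A_n$. For $G\in\{S_n,A_n\}$ the centralizer of an element of cycle type $\prod i^{a_i}$ is $\prod_i(\Z_i\wr S_{a_i})$ (intersected with $A_n$), so it is transitive only for a single cycle length, giving $\Z_i\wr S_{n/i}$, which is imprimitive-maximal precisely for $i=2$, and it is intransitive-maximal only for a single transposition. Matching these against the O'Nan--Scott description and the Liebeck--Praeger--Saxl tables of maximal subgroups of $S_n$ and $A_n$ yields exactly two families: a transposition, with $C_{S_n}=S_2\times S_{n-2}$ maximal iff $n\ne4$, and a fixpoint-free involution, with $C_{S_n}=S_2\wr S_{n/2}$. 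The parity of the involution then dictates the ambient group: a fixpoint-free involution is odd for $n\equiv2\pmod4$ (forcing $G=S_n$ and $\dis{Q}=A_n$) and even for $n\equiv0\pmod4$ (forcing $G=A_n$, where $(S_2\wr S_{n/2})\cap A_n$ must be checked for maximality and $\dis{Q}=A_n$ is automatic by simplicity). This pins down the four families of the statement, and the small exceptional class of size $36$ is located by a direct computation in $\mathrm{PGL}_2(9)$ and $M_{10}$, identifying the unique primitive example as {\tt SmallGroup(720,764)}.

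I expect the main obstacle to be the precise determination of the small-$n$ thresholds in the matching step, which is where the nontrivial group theory concentrates. The transposition and imprimitive families are ``generically'' maximal, but maximality can fail for sporadic reasons recorded in the Liebeck--Praeger--Saxl classification; in particular the exceptional isomorphism $A_8\simeq\mathrm{L}_4(2)$ makes $(S_2\wr S_4)\cap A_8$ non-maximal, which is exactly why case~(3) begins at $n=12$ rather than at $n=8$, while $n=4$ is excluded both by non-maximality of $S_2\times S_2$ in $S_4$ and by failure of $\dis{Q}\simeq A_4$. Ruling out every such accidental containment, together with confirming the converse direction (that each listed class is genuinely primitive with $\dis{Q}\simeq A_n$), is the delicate part and the place where I would lean on the explicit maximal-subgroup tables and on {\tt GAP} for the remaining finite checks.
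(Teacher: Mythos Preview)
Your approach is correct and reaches the same classification, but the route to eliminating the non-primitive conjugacy classes is genuinely different from the paper's. You reduce to conjugation quandles via the clean observation that primitivity forces the point stabilizer to equal the centralizer (from $H\le C_G(\zeta)\lneq G$ with $H$ maximal), and then search directly for conjugacy classes whose centralizer is maximal, exploiting the explicit structure $C_{S_n}(\pi)=\prod_i(\Z_i\wr S_{a_i})$ and comparing it against the intransitive and imprimitive maximal families. The paper instead arrives at conjugation quandles through the simple-quandle classification (Theorem~\ref{t:cfsq} and Lemma~\ref{l:lmlt}) and then eliminates classes by two elementary \emph{block-producing} criteria: if some nontrivial power of $e$ lies in $e^G$ (Lemma~\ref{mocnina}), or if two distinct conjugates share the same nonempty fixed-point set (Lemma~\ref{fix}), then an explicit proper block system exists. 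Your method is more uniform and ties the whole problem to maximal-subgroup data, at the cost of leaning on the Liebeck--Praeger--Saxl tables even for the negative direction and requiring extra care when intersecting with $A_n$ (in particular for the split classes with pairwise distinct odd cycle lengths, where you must still argue that the abelian centralizer $\prod\Z_{\ell_i}$ sits properly inside an intransitive maximal). The paper's criteria are more elementary and are designed to be portable to conjugation quandles over other simple groups where centralizer structure is less transparent, at the price of a more ad~hoc case split by cycle-type regime.
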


The quandles on the list are pairwise non-isomorphic, with the exception of $\cjg(S_6,(1\ 2))\simeq\cjg(S_6,(1\ 2)(3\ 4)(5\ 6))$.
Indeed, isomorphic quandles have isomorphic multiplication groups, which leaves us with the question of isomorphism between $\mathrm{Cjg}(S_n,(1,2))$ and $\mathrm{Cjg}(S_n,(1,2)(3,4)\dots(n-1,n))$. The two conjugacy classes have different sizes for all $n\neq 6$; while for $n=6$, any outer automorphism of $S_6$ is an isomorphism of the two quandles (cf. \cite[Theorem 7.5]{Rot}).

\begin{remark}
Simple quandles are homogeneous and thus admit a coset representation, $\mathcal Q(G,H,\varphi)=(G/H,*)$ where $xH*yH=x\varphi(x^{-1}y)H$ \cite{J-simple}. On can take, for instance, the minimal representation with $G=\dis Q$, $H=G_e$ (the stabilizer of $e$) and $\varphi=-^{L_e}$ (the automorphism $\alpha\mapsto L_e\alpha L_e^{-1}$) \cite{HSV}. The quandles from Theorem \ref{thm:main} admit the following minimal representations (cf. Lemma \ref{l:LPS}):
    \begin{enumerate}        
         \item[(1)] $Q\simeq\mathcal Q(A_n,(S_2\times S_n)\cap A_n,-^{(1,2)})$;
         \item[(2,3)] $Q\simeq\mathcal Q(A_n,(S_2\wr S_{n/2})\cap A_n,-^{(1,2)\dots(n-1,n)})$;
         \item[(4)] $Q\simeq \mathcal Q(A_6,\mathrm{fix}(\varphi),\varphi)$ where $\varphi$ is the outer automorphism from the semidirect product $A_6\rtimes\Z_2$.
		\end{enumerate}
\end{remark}

\subsection*{The structure of the paper}

In Section \ref{sec:prelim}, we recall basic terminology and notation of quandle theory, we recall the classification of finite simple quandles, define primitive quandles and show that primitive quandles are simple. 
In Section \ref{sec:affine} we prove that affine simple quandles are primitive.
In Section \ref{sec:techniques}, we present two elementary, but general criteria of imprimitivity of conjugation quandles. We also recall a special case of a variant of the O'Nan-Scott theorem that we use to prove primitivity.
In the final Section \ref{sec:proof}, we prove Theorem \ref{thm:main} by case by case analysis of conjugacy classes in $A_n$, $S_n$ and the two exceptional groups $A_6\rtimes\Z_2$.

\section{Preliminaries}\label{sec:prelim}


\subsection{Quandles}\label{ss:prelim_q}
      
A \textit{quandle} is an algebraic structure $Q=(Q,*)$ such that $x*x=x$ for all $x\in Q$, and all \textit{left translations} 
\[ L_{a}:Q \rightarrow Q, \quad x \mapsto a*x\] 
are automorphisms of $(Q,*)$.      

The subgroup of $\aut Q$ generated by the left translations is called the \emph{(left) multiplication group},
\[\mathrm{LMlt}(Q)=\langle L_{x}:x \in Q \rangle.\]
The \textit{displacement group} is its subgroup \[ \mathrm{Dis}(Q)=\langle L_{x}L_{y}^{-1}:x,y \in Q \rangle. \]

\begin{example} 
Let $G$ be a group and $C\subseteq G$ closed with respect to conjugation. The set $C$ posseses a natural quandle operation $x*y=y^x=xyx^{-1}$. This quandle will be denoted by $\cjg(G,C)$ and called a \emph{conjugation quandle} over the group $G$. If $C=G$, we will write shortly $\cjg(G)$. If $C=e^G$, the conjugacy class of $e$ in $G$, we will write shortly $\cjg(G,e)$. 
\end{example}

For every quandle $Q$, we have the \emph{Cayley homomorphism} 
\[ Q\to\cjg(\lmlt Q), \quad x\mapsto L_x. \] 
A quandle is called \emph{faithful} if it is injective.
     
A~quandle $Q$ is called \textit{connected} (or \textit{indecomposable}) if $\mathrm{LMlt}(Q)$ acts transitively on $Q$.
In such a case, all translations are conjugate in $\lmlt Q$, since $L_x^\alpha=L_{\alpha(x)}$. Therefore, the image of the Cayley homomorphism is $\cjg(\lmlt Q, L_e)$, for any $e\in Q$.

A notational remark: $e$ always denotes a fixed element of the underlying set of a quandle $Q$; this includes the case when $Q$ is a subset of a group (e.g., if $Q$ is a conjugation quandle), and in such a case, $e$ is not necessarily the unit element.

To learn more about quandles, we recommend \cite{AG,EN,HSV}.



\subsection{Simple quandles}        

A~quandle $Q$ is called \textit{simple} if $|Q|>1$ and every quandle homomorphism from $Q$ is injective or constant. In other words, if $Q$ has no proper congruence. 
(A technical requirement: congruences are defined with respect to both $*$ and $\ldiv$, where $x\ldiv y=L_x^{-1}(y)$, so that every quotient of a quandle is a quandle, cf. \cite{BS}.)

Every two-element algebraic structure is simple. There is a single two-element quandle, $(\{0,1\},*)$ with $x*y=y$, which is an exception to most statements below.

Simple quandles with at least three elements are faithful (consider the kernel of the Cayley homomorphism) and connected (orbit decomposition is a congruence). Therefore, every simple quandle $Q$ with more than two elements can be represented as a conjugation quandle $\cjg(\lmlt Q,L_e)$.


In \cite{CS-simple}, we propose to classify finite simple quandles according to the structure of their displacement group, due to the following theorem (explicitly formulated in \cite{CS-simple}, but essentially proved by Joyce in \cite{J-simple}).

\begin{theorem}\cite{J-simple}
Let $Q$ be a finite simple quandle, $|Q|>2$. Then there is a finite simple group $L$ and a positive integer $t$ such that $\dis Q\simeq L^t$.
\end{theorem}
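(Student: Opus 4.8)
The plan is to reduce the statement to the classical structure theorem for finite characteristically simple groups, via the observation that the displacement group of a simple quandle admits no proper nontrivial characteristic subgroups.

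First I would record the standard reductions already recalled in the excerpt: since $|Q|>2$ and $Q$ is simple, $Q$ is connected and faithful, so $G:=\lmlt Q$ acts transitively and faithfully on the finite set $Q$, and $D:=\dis Q$ is a finite normal subgroup of $G$ (conjugation by $L_z$ carries a generator $L_xL_y^{-1}$ to $L_{L_z(x)}L_{L_z(y)}^{-1}$, again a generator). I would also note $D\neq 1$: if $D=1$ then all left translations coincide, and idempotence $x*x=x$ forces them to be the identity, i.e.\ $x*y=y$; this projection quandle is not simple once $|Q|>2$.

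The core step is to prove that $D$ is \emph{characteristically simple}, i.e.\ that its only characteristic subgroups are $1$ and $D$. Let $M$ be a characteristic subgroup of $D$. Since $D\trianglelefteq G$, a characteristic subgroup of $D$ is normal in $G$, so $M\trianglelefteq G$. Now I invoke the characterization of simple quandles recalled in the introduction: every nontrivial normal subgroup of $\lmlt Q$ contains $\dis Q$. Hence if $M\neq 1$ then $D\leq M\leq D$, forcing $M=D$. Thus $D$ has no proper nontrivial characteristic subgroup. It then remains to apply the classical fact that a finite characteristically simple group is isomorphic to a direct power $L^t$ of a finite simple group $L$ (where $L$ may be a cyclic group $\Z_p$, giving the affine case, or a nonabelian simple group; one proves it by choosing a minimal normal subgroup $S$ of $D$, noting that the product of its $\aut D$-images is characteristic, hence all of $D$, and decomposes as a direct product of copies of $S$, which are then forced to be simple). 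Applied to $D=\dis Q$ this yields $\dis Q\simeq L^t$.

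The main obstacle is entirely contained in the characterization of simplicity that I used as a black box, namely the correspondence between congruences of the connected quandle $Q$ and normal subgroups of $G$. One direction is elementary: for any $N\trianglelefteq G$ the partition of $Q$ into $N$-orbits is a congruence, since for $a,b\in N$ one has $L_{a(x)}(b(y))=a L_x a^{-1} b(y)=n\,(x*y)$ with $n=a\cdot{}^{L_x}(a^{-1}b)\in N$, so the orbit of $x*y$ depends only on the orbits of $x$ and $y$; together with faithfulness this already shows that every nontrivial normal subgroup of $G$ acts transitively. Upgrading ``transitive'' to ``contains $D$'' is the delicate point and is the real substance of Joyce's argument \cite{J-simple}, where the finer part of the correspondence between congruences and the lattice of normal subgroups below $D$ is required.
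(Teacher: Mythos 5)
Your proof is correct. Since the paper only cites Joyce for this theorem and reproduces no argument of its own, there is no internal proof to compare against; your route --- showing that $\dis Q$ is characteristically simple (being a nontrivial normal subgroup of $\lmlt Q$ below which no nontrivial normal subgroup of $\lmlt Q$ can sit) and then invoking the structure theorem for finite characteristically simple groups --- is the standard one, essentially equivalent to observing that $\dis Q$ is the unique minimal normal subgroup of $\lmlt Q$. Two small remarks. First, the characterization you use must indeed be read with ``nontrivial'' inserted, as you do: the paper's introduction says ``every normal subgroup of the multiplication group contains the displacement group,'' which taken literally would force $\dis Q=1$. Second, the step you defer to Joyce as ``the delicate point'' --- upgrading ``every nontrivial normal subgroup of $\lmlt Q$ is transitive'' to ``every nontrivial normal subgroup contains $\dis Q$'' --- is in fact immediate: if $N\trianglelefteq\lmlt Q$ is transitive and $x,y\in Q$, choose $n\in N$ with $n(x)=y$; then $L_y=L_{n(x)}=nL_xn^{-1}$, so $L_yL_x^{-1}=n\,(L_xn^{-1}L_x^{-1})\in N$ by normality of $N$, whence $\dis Q\leq N$. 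Combined with your orbit-congruence computation (which for the $N$-orbit partition also needs, and easily gets, compatibility with $\ldiv$, e.g.\ because $L_x^{-1}$ is a power of $L_x$ in the finite case) and with faithfulness, this makes your argument entirely self-contained.
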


Direct powers of finite simple groups are indeed pairwise non-isomorphic, by the Jordan-H\"older theorem.

Let $Q$ be a simple quandle. If $\dis Q$ is an abelian group, then $Q$ is an affine quandle, and we leave the discussion for Section \ref{sec:affine}.
If $\dis Q\simeq L^t$ is a non-abelian group, then $Q$ is a conjugation quandle in a certain extension of $L^t$ modulo its center, as described in the following theorem
(proved by Joyce \cite{J-simple} and refered to as Guralnick's theorem in \cite[Theorem 3.7]{AG}).

\begin{theorem}\cite{AG,J-simple}\label{t:cfsq}
Let $Q$ be a finite simple quandle and $L^t$ a direct power of a finite simple non-abelian group.  Then $\dis Q\simeq L^t$ if and only if 
there is $\varphi\in\aut L$ such that 
\[ Q\simeq\cjg(G_{L,t,\varphi},e), \] where 
	\begin{itemize}
		\item $G_{L,t,\varphi}=L^t\rtimes\langle\varphi^{(t)}\rangle/Z$, where $\varphi^{(t)}(l_1,l_2,...,l_t)=(\varphi(l_t),l_1,l_2,\dots,l_{t-1})$,
		\item $Z$ is the center of $L^t\rtimes\langle\varphi^{(t)}\rangle$,
		\item $e\in G_{L,t,\varphi}$ such that $\langle e^{G_{L,t,\varphi}}\rangle=G_{L,t,\varphi}$. 
	\end{itemize}
Moreover, the group $G_{L,t,\varphi}$ is isomorphic to $\lmlt Q$, and we can choose $e=(1,\varphi^{(t)})Z$.
\end{theorem}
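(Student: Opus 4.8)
The statement is an equivalence, and I would prove the two implications separately: the ``if'' direction is a direct computation, whereas the ``only if'' direction needs a structural analysis of $\lmlt Q$. For the ``if'' direction, set $\tilde G=L^t\rtimes\langle\varphi^{(t)}\rangle$ and $\tilde e=(1,\varphi^{(t)})$. Since conjugation by $g$ and by $gz$ agree for every central $z$, and since $Z\cap L^t\le Z(L^t)=1$ makes the projection $\tilde e^{\tilde G}\to e^{G_{L,t,\varphi}}$ a bijection, the quandles $\cjg(\tilde G,\tilde e^{\tilde G})$ and $\cjg(G_{L,t,\varphi},e)$ are isomorphic and the Cayley representation is faithful with $\lmlt Q\simeq G_{L,t,\varphi}$. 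The translations being conjugations, $L_xL_y^{-1}$ is conjugation by $xy^{-1}$; writing $x=(l,\varphi^{(t)})$ and $y=(l',\varphi^{(t)})$ for generic elements of $\tilde e^{\tilde G}$, a one-line computation gives $xy^{-1}=(l(l')^{-1},1)$. Hence $\dis Q$ is the image of $L^t$ in $\tilde G/Z$, which is isomorphic to $L^t$ because $Z\cap L^t=1$; the hypothesis $\langle e^{G_{L,t,\varphi}}\rangle=G_{L,t,\varphi}$ ensures the elements $l(l')^{-1}$ exhaust $L^t$, so $\dis Q\simeq L^t$.

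For the ``only if'' direction, let $Q$ be a finite simple quandle with $\dis Q\simeq L^t$. As $L$ is non-abelian we have $|Q|>2$, so by the facts recalled earlier $Q$ is connected and faithful, and I identify it with $\cjg(G,L_e)$, where $G=\lmlt Q$ and the carrier is the conjugacy class $L_e^G$. Put $N=\dis Q\simeq L^t$; it is normal in $G$, with $G=N\langle L_e\rangle$ and $G/N$ cyclic generated by $L_eN$. Because $L$ is simple non-abelian, the minimal normal subgroups of $N$ are precisely its $t$ direct factors $L_1,\dots,L_t$. Conjugation by $G$ permutes them, and as $N$ fixes each factor this descends to a cyclic action of $G/N$ on $\{L_1,\dots,L_t\}$, generated by the permutation $\sigma$ induced by $L_e$.

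The heart of the argument is to show that $\sigma$ is a single $t$-cycle, and this is where I expect the main obstacle to lie. I would use the correspondence for connected quandles whereby the orbits of a normal subgroup $M\trianglelefteq G$ form a congruence of $Q$, which is proper as soon as $1\ne M$ acts intransitively on $L_e^G$. If $\sigma$ were not transitive, I could choose a proper nonempty $\sigma$-invariant $S\subsetneq\{1,\dots,t\}$ and take $M=\prod_{i\in S}L_i$: this $M$ is normal in $G$, nontrivial, and intransitive on the conjugacy class, yielding a proper congruence and contradicting simplicity. Thus $\sigma$ is a $t$-cycle; relabelling the factors and absorbing the identifications $L_i\to L_{i+1}$ around the cycle into one automorphism $\varphi\in\aut L$, the action of $L_e$ on $N$ becomes exactly $\varphi^{(t)}$. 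This presents $G$ as a quotient $\tilde G/K$ of $\tilde G=L^t\rtimes\langle\varphi^{(t)}\rangle$; since $K\cap L^t=1$ forces $K\le Z$ while faithfulness of $\lmlt Q$ (i.e.\ $G$ is centerless) forces $Z\le K$, we get $K=Z$ and hence $Q\simeq\cjg(G_{L,t,\varphi},e)$ with $e=(1,\varphi^{(t)})Z$. The delicate point throughout is the precise congruence/normal-subgroup correspondence and the verification that $M$ really is intransitive, which is exactly where the simplicity of $Q$ is used in full.
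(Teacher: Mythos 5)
The paper does not actually prove Theorem \ref{t:cfsq}: it is imported verbatim from Joyce \cite{J-simple} and Andruskiewitsch--Gra\~na \cite{AG}, so there is no in-paper argument to compare against. Your outline is, in substance, the standard proof of this classification, and its skeleton is sound: the ``if'' computation of $xy^{-1}=(l(l')^{-1},1)$ is correct, and the key ``only if'' step --- that simplicity forces the permutation $\sigma$ induced by $L_e$ on the $t$ simple factors of $\dis Q$ to be a $t$-cycle, via the orbit congruence of the normal subgroup $\prod_{i\in S}L_i$ --- is exactly the right use of simplicity (a nontrivial normal subgroup of $\lmlt Q$ that does not contain $\dis Q$ cannot be transitive, so its orbits give a proper congruence).

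Two places are thinner than you acknowledge. First, before you can ``present $G$ as a quotient of $\tilde G=L^t\rtimes\langle\varphi^{(t)}\rangle$'' you must check that the assignment $(1,\varphi^{(t)})\mapsto L_e$ is well defined, i.e.\ that $L_e^m=1$ where $m=\mathrm{ord}(\varphi^{(t)})$; a priori $L_e^m$ only lies in $C_G(N)$. One needs $C_G(N)=1$, which is another application of the same simplicity argument you use for $\sigma$ (a nontrivial normal subgroup meeting $N=\dis Q$ trivially cannot contain $\dis Q$, hence cannot be transitive, hence yields a proper congruence). Second, ``$K\cap L^t=1$ forces $K\le Z$'' is too quick: from normality of $K$ you get $[K,L^t]\le K\cap L^t=1$, so $K\le C_{\tilde G}(L^t)$, but centralizing $L^t$ does not by itself put an element in $Z(\tilde G)$ --- you still have to check that elements of $K$ commute with $(1,\varphi^{(t)})$, which follows from normality of $K$ under conjugation by $(1,\varphi^{(t)})$ together with the fact that each coset of $L^t$ meets $K$ in at most one element. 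Similarly, in the ``if'' direction the claim $\lmlt Q\simeq G_{L,t,\varphi}$ silently uses that $\tilde G/Z$ is centerless. None of these is a wrong turn --- each is fillable by arguments of the same kind you already deploy --- but they are genuine steps, not bookkeeping.
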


To obtain the groups $G_{L,t,\varphi}$ up to isomorphism, we can choose $\varphi$ up to conjugation in the outer automorphism group of $L^t$ \cite[Theorem 3.7]{AG}:

\begin{proposition}\cite{AG}\label{p:cfsq-iso}   
For $i=1,2$, let $L_i$ be finite simple groups, $t_i$ positive integers and $\varphi_i \in \mathrm{Aut}(L_i^{t_i})$.
Then $G_{L_1,t_1,\varphi_1} \simeq G_{L_2,t_2,\varphi_2}$ if and only if $L_1 \simeq L_2$, $t_1 = t_2$ and $\varphi_1$ and $\varphi_2$ are conjugate as the corresponding elements of $\mathrm{Out}(L_1^{t_1})$. 
\end{proposition}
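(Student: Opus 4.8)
The plan is to recover the three parameters $L$, $t$, and (the conjugacy class of) $\varphi$ from the internal group structure of $G_{L,t,\varphi}$, so that any abstract isomorphism is forced to match them. The central object is the socle. First I would observe that inside $G:=L^t\rtimes\langle\varphi^{(t)}\rangle$ the normal subgroup $L^t$ is a minimal normal subgroup: since $\varphi^{(t)}$ shifts the $t$ coordinates cyclically, the conjugation action of $G$ permutes the $t$ simple factors transitively, so $L^t$ has no proper nontrivial $G$-invariant subgroup. As $L$ is non-abelian, $Z(L^t)=1$, whence $Z\cap L^t=1$ and the composite $L^t\hookrightarrow G\twoheadrightarrow G/Z=G_{L,t,\varphi}$ is injective. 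I would then argue that the image $\overline{L^t}$ is the whole socle (the unique minimal normal subgroup): any other minimal normal subgroup would meet $\overline{L^t}$ trivially and hence centralize it, so it suffices to know that the socle is self-centralizing in $G_{L,t,\varphi}$. Granting this, $\mathrm{Soc}(G_{L,t,\varphi})\simeq L^t$, and an isomorphism $G_{L_1,t_1,\varphi_1}\simeq G_{L_2,t_2,\varphi_2}$ carries socle to socle, forcing $L_1^{t_1}\simeq L_2^{t_2}$; by the uniqueness of direct powers of finite simple groups (Jordan--H\"older, as recalled above) this yields $L_1\simeq L_2$ and $t_1=t_2$, the first two required conditions.

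For the remaining parameter I would use the self-centralizing property, whose precise form is the identity $C_G(L^t)=Z$; this says exactly that modulo its center the group acts faithfully by conjugation on its socle. Concretely, an element $(l,(\varphi^{(t)})^k)$ centralizes $L^t$ only if $(\varphi^{(t)})^k$ is an inner automorphism of $L^t$, which forces $t\mid k$ and $\varphi^{k/t}\in\mathrm{Inn}(L)$; a short argument then shows every such element already lies in $Z(G)$. Consequently $G_{L,t,\varphi}=G/Z$ embeds into $\aut{L^t}$ with image $H_\varphi:=\langle\mathrm{Inn}(L^t),\varphi^{(t)}\rangle$, a subgroup containing $\mathrm{Inn}(L^t)$ and with trivial centralizer of its socle $\mathrm{Inn}(L^t)\simeq L^t$. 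Fixing now $L_1=L_2=L$ and $t_1=t_2=t$, any abstract isomorphism $f\colon H_{\varphi_1}\to H_{\varphi_2}$ must preserve the socle, giving $\theta:=f|_{\mathrm{Soc}}\in\aut{L^t}$; since both groups act faithfully on their socle, one checks that $f$ agrees with conjugation by $\theta$, so $\theta H_{\varphi_1}\theta^{-1}=H_{\varphi_2}$. Projecting to $\mathrm{Out}(L^t)=\aut{L^t}/\mathrm{Inn}(L^t)$ and using $\mathrm{Inn}(L^t)\trianglelefteq\aut{L^t}$, this becomes $\bar\theta\langle\overline{\varphi_1^{(t)}}\rangle\bar\theta^{-1}=\langle\overline{\varphi_2^{(t)}}\rangle$, i.e.\ the images of $\varphi_1^{(t)}$ and $\varphi_2^{(t)}$ generate conjugate cyclic subgroups of $\mathrm{Out}(L^t)$. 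The converse (``if'' direction) runs the same identity backwards: a conjugating $\theta\in\aut{L^t}$ yields $\theta H_{\varphi_1}\theta^{-1}=H_{\varphi_2}$ and hence an isomorphism, which automatically descends through the characteristic subgroup $Z$.

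I expect two points to require the most care. The first is the self-centralizing claim $C_G(L^t)=Z$: while $Z\subseteq C_G(L^t)$ is immediate, the reverse inclusion needs the computation above together with the observation that a generator of $C_G(L^t)$ is automatically fixed by $\varphi^{(t)}$ (using $Z(L)=1$ to pin down the conjugating element). This is precisely the structural fact that makes quotienting by $Z$ produce an almost-simple-type group, and it is the hinge of the whole argument. The second, more bookkeeping-heavy point is matching the \emph{generator} $\varphi^{(t)}$ rather than merely the cyclic subgroup it generates: an isomorphism a priori identifies only the cyclic subgroups $\langle\overline{\varphi_i^{(t)}}\rangle$ up to conjugacy, so one must analyze their distinguished generators inside $\mathrm{Out}(L^t)=\mathrm{Out}(L)\wr S_t$, in which $\overline{\varphi^{(t)}}$ is an element whose $S_t$-component is a $t$-cycle and whose ``cycle product'' in $\mathrm{Out}(L)$ is $\bar\varphi$, in order to pass to the element-level statement. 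I would handle this last step by computing conjugacy of elements of the wreath product $\mathrm{Out}(L)\wr S_t$ in terms of their cycle products, which reduces the question to conjugacy data of $\varphi$ in $\mathrm{Out}(L)$.
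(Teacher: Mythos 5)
The paper offers no proof of this proposition---it is quoted from \cite{AG}---so the comparison is with the standard argument, which is essentially what you reconstruct. Your main line is correct: $L^t$ is a minimal normal subgroup of $L^t\rtimes\langle\varphi^{(t)}\rangle$ because the extension permutes the $t$ simple factors transitively; the identity $C_G(L^t)=Z$ holds (your sketch is right: if $(l,\psi^k)$ centralizes $L^t$ then $\psi^k$ is inner, forcing $t\mid k$ and $\varphi^{k/t}\in\mathrm{Inn}(L)$, and comparing $\psi^{k}\psi=\psi\psi^{k}$ with $Z(L^t)=1$ forces $\psi(l)=l$, so the element is central); hence the image of $L^t$ is the self-centralizing socle of $G_{L,t,\varphi}$, which recovers $L$ and $t$, and $G_{L,t,\varphi}$ is identified with $H_\varphi=\langle\mathrm{Inn}(L^t),\varphi^{(t)}\rangle\le\mathrm{Aut}(L^t)$. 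The rigidity step (any abstract isomorphism $H_{\varphi_1}\to H_{\varphi_2}$ is conjugation by some $\theta\in\mathrm{Aut}(L^t)$, by faithfulness of the action on the socle) is also standard and correct.

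The genuine gap is exactly the point you flag at the end, and the remedy you propose cannot close it. Your argument yields $\bar\theta\,\langle\overline{\varphi_1^{(t)}}\rangle\,\bar\theta^{-1}=\langle\overline{\varphi_2^{(t)}}\rangle$, i.e.\ conjugacy of \emph{cyclic subgroups} of $\mathrm{Out}(L^t)$, and no wreath-product bookkeeping will upgrade this to conjugacy of the distinguished \emph{generators}: $H_\varphi$ is just the full preimage of $\langle\overline{\varphi^{(t)}}\rangle$ under $\mathrm{Aut}(L^t)\to\mathrm{Out}(L^t)$, so an abstract isomorphism of the groups retains no memory of which generator was chosen. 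Concretely, for $t=1$ and $L=\mathrm{PSL}_2(8)$ with $\mathrm{Out}(L)\simeq\Z_3$ generated by the Frobenius class $\bar\varphi$, one has $H_\varphi=H_{\varphi^2}=\mathrm{Aut}(L)$, hence $G_{L,1,\varphi}\simeq G_{L,1,\varphi^2}$, while $\bar\varphi\ne\bar\varphi^2$ are non-conjugate in the abelian group $\mathrm{Out}(L)$. So the element-level ``only if'' is not provable in this generality; what the group determines is the conjugacy class of the cyclic subgroup (the element-level data is what governs isomorphism of the \emph{quandles}, which is the setting of \cite{AG}). For the use made of the proposition in this paper the issue evaporates, since for $L=A_n$, $t=1$ the group $\mathrm{Out}(A_n)$ has exponent $2$, so every cyclic subgroup has a unique generator and subgroup-conjugacy coincides with element-conjugacy; in that case your proof is complete.
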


\subsection{Primitive groups and primitive quandles} 

Let $G$ be a group acting transitively on a set $X$, $|X|>1$.
A \emph{$G$-block} is a subset $B \subseteq X$ such that $gB=B$ or $gB \cap B = \emptyset$ for every $g \in G$. 
A relation $\sim$ on $X$ is called \emph{$G$-invariant} if $x\sim y$ implies $gx\sim gy$ for every $g\in G$.
Indeed, equivalence classes of a $G$-invariant equivalence are $G$-blocks. Conversely, given a $G$-block $B$, we can define a $G$-invariant equivalence by $x\sim y$ iff both $x,y$ belong to $gB$ for some $g\in G$.

The action is called \textit{primitive} if there is no proper $G$-block, or equivalently, no proper $G$-invariant equivalence.
It is well known that a transitive group acts primitively if and only if for each $x \in X$, the stabilizer $G_x$ is a maximal subgroup of $G$. 

A~quandle $Q$ is called \textit{primitive} if $|Q|>1$ and $\mathrm{LMlt}(Q)$ acts primitively on $Q$. This means, there is no proper $\lmlt Q$-invariant equivalence on $Q$, i.e., no proper equivalence invariant with respect to left translations. 

By definition, a congruence of $Q$ is an equivalence invariant with respect to both left and right translations of $Q$. Therefore, primitive quandles are simple, but one can expect that the converse is seldom true (which we confirm explicitly for simple quandles with alternating displacement group). 
We will use the classification of finite simple quandles as the ground for classification of finite primitive quandles. 


The following theorem of Bonatto \cite{Bon} classifies finite primitive quandles over nontrivial powers of simple groups. For reader's convenience, we prove the forward implication in Section \ref{sec:imprim}. 

\begin{theorem}\cite{Bon}\label{thm:primitive t>1}
Let $Q=\cjg(G_{L,t,\varphi},e)$ be a finite simple quandle (in the notation of Theorem \ref{t:cfsq}) such that $G$ is non-abelian and $t>1$. Then $Q$ is primitive if and only if $\varphi=1$ and $t$ is prime.
\end{theorem}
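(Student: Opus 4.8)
The plan is to reduce primitivity to a purely group-theoretic question about the subgroup lattice of $L^t$, and then dispatch the two implications by exhibiting (respectively, ruling out) intermediate invariant subgroups. Throughout, write $G=\lmlt Q\simeq G_{L,t,\varphi}$, $N=\dis Q\simeq L^t$, and note that $L$ is a finite non-abelian simple group. Since $Q$ is simple it is connected, so $N$ is already transitive and $G/N$ is cyclic, generated by the image of $L_e$. Taking $e=(1,\varphi^{(t)})Z$, conjugation by $L_e$ induces on $N\simeq L^t$ exactly the automorphism $\varphi^{(t)}$ (here $Z\cap L^t=1$ because $L$ is centreless, so $N$ embeds faithfully), and the point stabilizer in $N$ is $N_e=\fix(\varphi^{(t)})=\{(l,\dots,l):l\in\fix(\varphi)\}=:D$. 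The first step is the sharp form of the Galois correspondence for connected quandles: the map $H\mapsto H\cap N$ is a lattice isomorphism from the interval $[G_e,G]$ onto the poset of $\varphi^{(t)}$-invariant subgroups $K$ with $D\le K\le L^t$. I would verify this directly from $G=NG_e$, $G_e=N_e\langle L_e\rangle$ and $\langle L_e\rangle\cap N\le N_e\le K$, so that $G_e$-invariance of $K$ collapses to $\varphi^{(t)}$-invariance. Primitivity of $Q$ (equivalently, maximality of $G_e$) then amounts to the statement that $D$ and $L^t$ are the \emph{only} $\varphi^{(t)}$-invariant subgroups between them.

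For the implication ``primitive $\Rightarrow\varphi=1$ and $t$ prime'' I would argue the contrapositive by producing a proper intermediate invariant $K$. If $\varphi\ne 1$, take $K=(\fix\varphi)^t$: it is $\varphi^{(t)}$-invariant because each coordinate of a shifted tuple, including the one to which $\varphi$ is applied, stays in $\fix(\varphi)$; it strictly contains $D$ (as $\fix(\varphi)\ne 1$ has off-diagonal tuples once $t\ge 2$) and is proper in $L^t$ (as $\fix(\varphi)\lneq L$). Crucially, $\fix(\varphi)\ne 1$: a non-abelian simple group admits no nontrivial fixed-point-free automorphism, since a finite group with such an automorphism is solvable. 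If instead $\varphi=1$ but $t$ is composite, then $\varphi^{(t)}$ is the cyclic shift $\sigma$ and $D=\Delta$ is the full diagonal; for a proper nontrivial subgroup $H\le\Z/t$, the ``periodic'' subgroup $K_H=\{(l_1,\dots,l_t):l_i=l_j\text{ whenever }i\equiv j \bmod H\}$ of tuples constant on the $\sigma$-orbits (cosets of $H$) is $\sigma$-invariant and strictly between $\Delta$ and $L^t$. Either way $Q$ is imprimitive.

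For the converse, ``$\varphi=1$ and $t$ prime $\Rightarrow$ primitive'', I must show that the only $\sigma$-invariant subgroups $K$ with $\Delta\le K\le L^t$ are $\Delta$ and $L^t$. Given such a $K$, consider $P_1=\{a\in L:(a,1,\dots,1)\in K\}$; conjugating by the diagonal element $(g,\dots,g)\in\Delta\le K$ shows $P_1\trianglelefteq L$, hence $P_1\in\{1,L\}$. If $P_1=L$ then the first coordinate subgroup $L\times 1\times\cdots\times 1$ lies in $K$, and $\sigma$-invariance forces all coordinate subgroups into $K$, giving $K=L^t$. If $P_1=1$ then $K\cap L_i=1$ for every $i$, so $K$ is a subdirect subgroup of $L^t$; since $K\supseteq\Delta$, the Goursat/Remak structure of subgroups of a power of a simple group forces $K$ to be a partition subgroup $K_R=\{x:x_i=x_j\ \forall (i,j)\in R\}$ for a $\sigma$-invariant equivalence $R$. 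A $\sigma$-invariant partition of $\Z/t$ consists of cosets of a subgroup of $\Z/t$; with $t$ prime and block size $\ge 2$ (forced by $K\cap L_1=1$), there is a single block, i.e.\ $K=\Delta$. Thus $K\in\{\Delta,L^t\}$, and $G_e$ is maximal.

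\textbf{Main obstacle.} The delicate point is the $P_1=1$ case of the converse: reducing a subdirect $\sigma$-invariant subgroup containing the diagonal to a partition subgroup requires the structure theory of subdirect products of non-abelian simple groups (that ``unlinked'' coordinates are independent and links are diagonal identifications), after which the prime-$t$ bookkeeping on $\sigma$-invariant partitions is routine. On the imprimitivity side the only non-elementary input is the nonexistence of fixed-point-free automorphisms of non-abelian simple groups, ensuring $\fix(\varphi)\ne 1$. I would also take care over the two bookkeeping subtleties in the reduction step, namely the behaviour of the central quotient $Z$ and the exact form $G_e=N_e\langle L_e\rangle$ of the stabilizer, since these make the lattice correspondence an isomorphism rather than merely an order-preserving map.
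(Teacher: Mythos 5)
Your argument is correct, and it covers strictly more than the paper does: the paper proves only the forward implication (exhibiting imprimitivity when $\varphi\neq1$ or $t$ is composite) and cites \cite{Bon} for the converse, whereas you prove both directions. The routes also genuinely differ. The paper works upstairs in $G=L^t\rtimes\langle\psi\rangle/Z$ with $\psi=\varphi^{(t)}$, displaying explicit chains of subgroups strictly between the stabilizer $D_{\fix(\varphi)}\rtimes\langle\psi\rangle/Z$ and $G$; you instead set up the interval correspondence $[G_e,G]\leftrightarrow\{\psi\text{-invariant } K \text{ with } D\leq K\leq L^t\}$ and argue entirely inside $L^t$. Your $Z$-bookkeeping is sound: $Z\cap L^t=Z(L^t)=1$ and $Z\leq\fix(\psi)\rtimes\langle\psi\rangle$, so indeed $G_e=N_e\langle L_e\rangle$ and $\langle L_e\rangle\cap N\leq N_e$, making the correspondence a lattice isomorphism as you claim. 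Two comparative points. First, on the imprimitivity side your witness $(\fix\varphi)^t$ costs you the nonexistence of fixed-point-free automorphisms of finite non-abelian simple groups (Thompson for prime order, CFSG-strength in general), an input the paper's chain is designed to avoid; in exchange, your witness is genuinely $\psi$-invariant, whereas the paper's displayed middle term $D_L\rtimes\langle\psi\rangle/Z$ is not in fact a subgroup when $\varphi\neq1$, since $\psi(l,\dots,l)=(\varphi(l),l,\dots,l)\notin D_L$ in general --- indeed $\langle D_L,\psi\rangle$ contains every $(l\varphi(l)^{-1},1,\dots,1)$, whose entries generate a nontrivial normal subgroup of $L$, hence it is the whole group --- so your choice actually repairs a slip in the paper's sketch; likewise your coset-partition subgroups $K_H$ for composite $t$ match the paper's periodic subgroups but correctly keep all of $\langle\psi\rangle$ in the overgroup, where the paper's chain carries only $\langle\psi^s\rangle$, which fails to contain the stabilizer. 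Second, your converse --- $K\cap L_1\trianglelefteq L$ by conjugating with $\Delta\leq K$; then either a coordinate factor lies in $K$ and shift-invariance gives $K=L^t$, or $K$ is subdirect, and the Goursat/Scott description of subdirect subgroups of powers of non-abelian simple groups as twisted-diagonal partition subgroups (the twists being killed by $\Delta\leq K$), together with the fact that shift-invariant partitions of $\Z_t$ are coset partitions, forces $K=\Delta$ for prime $t$ --- is the standard argument and is complete; this is exactly the half the paper leaves to \cite{Bon}. In short: correct, more self-contained than the paper's treatment, at the price of one heavy group-theoretic input and the structure theory of subdirect products, both of which you invoke accurately and flag honestly.
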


The classification of primitive quandles with $t=1$ is open and the present paper addresses the case of alternating groups.

\section{Affine primitive quandles}\label{sec:affine}

Let $A$ be an abelian group and $f\in\aut A$. Then $\aff(A,f)=(A,*)$ under the operation \[ a*b=(1-f)(a)+f(b) \] is a quandle. Every quandle isomorphic to some $\aff(A,f)$ is called \emph{affine}.
It is not difficult to calculate that the displacement group is isomorphic to $\mathrm{Im}(1-f)\leq A$. Conversely, every connected quandle with an abelian displacement group is affine \cite[Theorem 7.3]{HSV}.

For affine simple quandles, there is a more convenient description than the one provided by Theorem \ref{t:cfsq}.

\begin{theorem}\cite{AG}
Let $Q$ be a finite simple quandle. The following are equivalent:
\begin{enumerate}
	\item $\dis Q$ is abelian;
	\item there is a prime $p$, positive integer $t$ and a linear operator $f$ acting irreducibly on the space $\Z_p^t$ such that $Q\simeq\aff(\Z_p^t,f)$.
\end{enumerate}
\end{theorem}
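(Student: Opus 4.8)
The plan is to treat the two implications separately, with essentially all of the content living in (1) $\Rightarrow$ (2). The implication (2) $\Rightarrow$ (1) I would dispose of in one line: for any finite abelian group $A$ and any $f\in\aut A$, the displacement group of $\aff(A,f)$ is $\img{1-f}$, a subgroup of $A$ and hence abelian. Neither simplicity nor irreducibility of $f$ is used here, so $\dis Q$ is abelian whenever $Q\simeq\aff(\Z_p^t,f)$.

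For (1) $\Rightarrow$ (2) I would assume $Q$ simple with $\dis Q$ abelian and first reduce to the connected case. The two-element quandle is $\aff(\Z_2,\mathrm{id})$, with $\mathrm{id}$ acting irreducibly on the one-dimensional space $\Z_2$, so I may assume $|Q|>2$; then $Q$ is connected, and \cite[Theorem 7.3]{HSV} provides an affine presentation $Q\simeq\aff(A,f)$ with $A$ a finite abelian group and $f\in\aut A$. The remaining task is to upgrade this to $A\simeq\Z_p^t$ with $f$ irreducible.

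The key step is a dictionary between $f$-invariant subgroups and congruences. I would show that every $f$-invariant subgroup $B\le A$ yields a congruence $\theta_B$ of $\aff(A,f)$ whose classes are the cosets of $B$. The verification rests on the identity $(x*y)-(x'*y')=(1-f)(x-x')+f(y-y')$ together with the analogous computation for $x\ldiv y=f^{-1}(y-(1-f)(x))$: since $f(B)=B$ forces both $(1-f)(B)\subseteq B$ and $f^{-1}(B)=B$, the relation $\theta_B$ is respected by $*$ and by $\ldiv$. As $\theta_B$ is proper exactly when $0\ne B\ne A$, simplicity of $Q$ forbids any $f$-invariant subgroup strictly between $0$ and $A$.

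From here the conclusion follows by feeding characteristic (hence $f$-invariant) subgroups into this obstruction. The Sylow subgroups of $A$ are characteristic, so the absence of proper nonzero invariant subgroups forces $A$ to be a $p$-group for a single prime $p$; the characteristic subgroup $pA$ then satisfies $pA\ne A$, hence $pA=0$, so $A\simeq\Z_p^t$ for some $t\ge1$. Viewing $A=\F_p^t$ as a vector space, the $f$-invariant subgroups coincide with the $f$-invariant subspaces, so the condition already established says precisely that $f$ acts irreducibly. I expect the only delicate point to be the congruence verification in the key step, namely checking invariance under the division $\ldiv$ as well as under $*$ — exactly the two-sidedness built into the definition of a quandle congruence; everything else is bookkeeping once \cite[Theorem 7.3]{HSV} has reduced matters to the affine setting.
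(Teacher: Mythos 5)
Your proof is correct. There is nothing in the paper to compare it against: the statement is quoted from \cite{AG} without proof, so any argument you give is necessarily your own route. Your route is the natural one and it is complete, resting only on facts the paper itself records in its preliminaries (a simple quandle with more than two elements is connected, and a connected quandle with abelian displacement group is affine by \cite[Theorem 7.3]{HSV}). The one substantive step --- that the cosets of an $f$-invariant subgroup $B\le A$ form a congruence of $\aff(A,f)$ --- is verified correctly for both $*$ and $\ldiv$ (and you rightly note that only this direction of the dictionary is needed: simplicity then kills all proper nonzero invariant subgroups, and feeding in the Sylow subgroups and $pA$ yields $A\simeq\Z_p^t$ with $f$ irreducible). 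One peripheral remark: treating the two-element quandle as $\aff(\Z_2,\mathrm{id})$ is consistent with the theorem as literally stated, but it sits slightly at odds with the paper's follow-up claim that condition (2) forces $1-f$ to be an automorphism; that remark tacitly assumes $|Q|>2$, the standing exception the paper flags earlier, so your extra care here is harmless and if anything exposes a small imprecision in the surrounding text rather than in your argument.
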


In particular, condition (2) implies that $1-f$ is an automorphism, $Q$ is connected (in fact, latin), and thus $\dis Q\simeq\Z_p^t$.

It is not difficult to prove that all finite affine simple quandles are primitive. This seems to be a known result, but we could not find an explicit proof in literature, so we include it here.

\begin{proposition}
A finite affine quandle is primitive if and only if it is simple.
\end{proposition}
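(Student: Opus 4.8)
The plan is to prove both directions, noting that the forward implication (primitive $\Rightarrow$ simple) is already established in full generality in Section \ref{sec:prelim}, so the real content is the converse: a finite affine simple quandle is primitive. First I would invoke the characterization of finite affine simple quandles, so that I may assume $Q=\aff(\Z_p^t,f)$ where $f$ acts irreducibly on $V=\Z_p^t$ and $1-f$ is invertible. The goal then reduces to a purely group-theoretic statement: the action of $\lmlt Q$ on the underlying set $V$ is primitive.

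The key observation is that for affine quandles, the left translations are the affine maps $L_a(x)=(1-f)(a)+f(x)$, so $\lmlt Q$ sits inside the affine group $\Aff(V)=V\rtimes\GL(V)$. Since $1-f$ is invertible, the differences $L_aL_b^{-1}$ produce all translations by vectors in $\img{1-f}=V$, so $\dis Q=V$ acting as the full translation subgroup, and $\lmlt Q=V\rtimes\langle f\rangle$. I would then use the standard fact that for a subgroup $G=V\rtimes H$ of $\Aff(V)$ containing all translations, the action on $V$ is primitive if and only if $H$ (here $\langle f\rangle$) acts \emph{irreducibly} on $V$: the proper blocks through $0$ correspond exactly to proper $H$-invariant subgroups (equivalently subspaces) of $V$. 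Since $f$ acts irreducibly by hypothesis, $\langle f\rangle$ has no invariant subspace other than $0$ and $V$, hence no proper block, hence the action is primitive.

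To make this rigorous I would argue directly at the level of blocks rather than merely cite the affine-primitivity folklore. Let $B$ be a block containing $0$; translating by elements of $\dis Q=V$ shows the blocks partition $V$ into cosets of $B$, forcing $B$ to be a subgroup of $V$, hence a subspace. Invariance under $L_0=f$ (the translation $(1-f)(0)=0$ means $L_0=f$) then shows $f(B)=B$, so $B$ is an $f$-invariant subspace; irreducibility gives $B=0$ or $B=V$. This pins down primitivity.

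The main obstacle, and the step deserving the most care, is the identification $\dis Q=V$ and the verification that the block containing $0$ is genuinely a subspace and not merely a subset closed under the block condition. The subgroup argument hinges on the invertibility of $1-f$ (which supplies \emph{all} translations, not just those in a proper image), and the passage from ``$f$-invariant subgroup'' to ``$f$-invariant subspace'' uses that $V$ is an elementary abelian $p$-group so every subgroup is automatically a $\Z_p$-subspace. Once these points are nailed down, irreducibility of $f$ closes the argument cleanly.
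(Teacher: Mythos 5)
Your proof is correct and is in essence the paper's own argument: both identify the block (equivalence class) containing $0$ as an $f$-invariant subspace of $\Z_p^t$ and then invoke irreducibility of $f$. The only difference is language --- the paper verifies closure under addition via the quandle-term identity $a+b=(a/0)*(0\ldiv b)$, which is exactly your observation that $\dis Q$ is the full translation group, so that $\lmlt Q=\Z_p^t\rtimes\langle f\rangle$ and the standard block-is-a-subgroup argument applies.
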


\begin{proof}
The forward implication is true for every quandle, we prove the converse.

Let $Q=\aff(\Z_p^t,f)$ where $f$ acts irreducibly on $\Z_p^t$. Since $Q$ is connected, $1-f$ is onto, and since $Q$ is finite, it is bijective. 
Denote $a/b$ the unique $x$ such that $a=x*b$ (explicitly, $a/b=(1-f)^{-1}(a-f(b))$) and observe that $a+b=(a/0)*(0\ldiv b)$. 

Consider a proper $\lmlt Q$-invariant equivalence $\sim$ on $Q$. 
We will prove that $H=\{a:a\sim 0\}$ is an $f$-invariant subspace of $\Z_p^t$:
if $a,b\sim 0$, then $0\ldiv b\sim 0\ldiv 0=0$, hence $a+b=(a/0)*(0 \ldiv b)\sim (a/0)*0=a\sim 0$, so $H$ is a subspace; and we also have $f(a)=0*a\sim 0*0=0$, so $H$ is $f$-invariant.
Now observe that $H\neq\{0\}$, since $a\sim b$ implies $(0/b)*a\sim(0/b)*b=0$ and $(0/b)*a=0$ if and only if $a=b$. 
But $f$ acts irreducibly, so $H=\Z_p^t$ and thus $\sim$ is the full equivalence.
\end{proof}

\section{Proof techniques}\label{sec:techniques}

\subsection{Sources of imprimitivity}\label{sec:imprim}

We present two general negative results for conjugation quandles. 
Let $G$ be a group and $e\in G$ such that $\langle e^G\rangle=G$, and denote $Q=\cjg(G,e)$. Then $\lmlt Q\simeq\mathrm{Inn}(G)$, hence the action of $\lmlt Q$ on $Q$ is equivalent to the action of $G$ on $e^G$ by conjugation (with kernel $Z(G)$).

        


\begin{lemma} \label{mocnina} 
Let $G$ be a group and $e\in G$ such that $\langle e^G\rangle=G$. If $\{e\}\subsetneq \langle e\rangle\cap e^G\subsetneq e^G$, then $\cjg(G,e)$ is imprimitive.
\end{lemma}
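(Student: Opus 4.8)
The plan is to exploit the description, noted just before the statement, that the action of $\lmlt Q$ on $Q$ is equivalent to the conjugation action of $G$ on the class $e^G$. Consequently, to prove that $\cjg(G,e)$ is imprimitive it suffices to exhibit a proper nontrivial $G$-invariant equivalence on $e^G$, i.e., a block $B$ with $1<|B|<|e^G|$. The hypothesis gives us a natural candidate, namely $\langle e\rangle\cap e^G$, so the whole proof amounts to realizing this set as a block.

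The equivalence I would use is ``generating the same cyclic subgroup'': declare $x\sim y$ iff $\langle x\rangle=\langle y\rangle$. This is obviously reflexive, symmetric and transitive. It is $G$-invariant because conjugation by $g$ is an automorphism of $G$, so $\langle gxg^{-1}\rangle=g\langle x\rangle g^{-1}$; hence $\langle x\rangle=\langle y\rangle$ forces $\langle gxg^{-1}\rangle=\langle gyg^{-1}\rangle$. Restricting $\sim$ to the conjugacy class $e^G$ thus yields a $G$-invariant equivalence, and its classes are $G$-blocks.

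The crux is to identify the $\sim$-class of $e$ inside $e^G$ with the set $\langle e\rangle\cap e^G$. First I would check the inclusion $\langle e\rangle\cap e^G\subseteq\{x\in e^G:\langle x\rangle=\langle e\rangle\}$: any $x$ in the left-hand side is conjugate to $e$, hence has the same order as $e$, and an element of the finite cyclic group $\langle e\rangle$ having order $|e|$ must generate it, so $\langle x\rangle=\langle e\rangle$. The reverse inclusion is immediate, since $\langle x\rangle=\langle e\rangle$ gives $x\in\langle e\rangle$, and $x\in e^G$ by assumption. Therefore the block $B$ containing $e$ equals exactly $\langle e\rangle\cap e^G$.

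Finally, the hypothesis $\{e\}\subsetneq\langle e\rangle\cap e^G\subsetneq e^G$ says precisely that this block $B$ has at least two elements and is a proper subset of $e^G$. Hence $\sim$ is neither the identity relation nor the all-relation: it is a proper $G$-invariant equivalence, which shows that $\cjg(G,e)$ is imprimitive. The only genuinely nonformal step is the order argument in the previous paragraph, i.e., noticing that conjugacy pins down the order and that same order inside a cyclic group forces generation; everything else is routine.
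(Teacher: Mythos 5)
Your proof is correct and takes essentially the same route as the paper: the paper uses the relation $g\sim h$ iff $h=g^k$ for some $k\in\Z$, which on a conjugacy class of elements of finite order coincides with your relation $\langle g\rangle=\langle h\rangle$, and both arguments hinge on the same key observation that conjugate elements share their order, so that a power of $e$ lying in $e^G$ must generate $\langle e\rangle$. (Like the paper, your order argument tacitly assumes $e$ has finite order --- you speak of the ``finite cyclic group $\langle e\rangle$'' --- but this is harmless for every application in the paper.)
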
 

\begin{proof} 
Consider the relation $\sim$ on the set $e^G$ defined by $g\sim h$ if and only if $h=g^k$ for some $k\in\Z$. The relation is symmetric: $g,h$ have the same order $n$, hence $k$ is invertible modulo $n$ (if $n=\infty$, the case is clear). It follows that $\sim$ is an equivalence. Clearly, it is $G$-invariant. The assumption implies that it is proper.
\end{proof}

We will often apply this lemma in situation when $e\neq e^{-1}\in e^G$, and ocassionally in its full strength.

In the following lemma, let $\fix(g)$ denote the set of fixed points of a permutation $g$.

\begin{lemma} \label{fix} 
Let $G$ be a group and $e\in G$ such that $\langle e^G\rangle=G$. If there exists $e\neq f\in e^G$ such that $\fix(e)=\fix(f)\neq\emptyset$, then $\cjg(G,e)$ is imprimitive.
\end{lemma}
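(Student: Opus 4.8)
The plan is to build a proper $\lmlt Q$-invariant equivalence on $Q=e^G$ out of the fixed-point sets of the permutations involved. As recalled before Lemma~\ref{mocnina}, the action of $\lmlt Q$ on $Q$ is equivalent to the conjugation action of $G$ on $e^G$, so it suffices to produce a conjugation-invariant equivalence on $e^G$ that is neither the equality relation nor the full relation.

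First I would record how fixed points transform under conjugation. If a point $\alpha$ is fixed by $x$, then $(gxg^{-1})(g\alpha)=gx\alpha=g\alpha$, so $g\alpha$ is fixed by $gxg^{-1}$; applying the same argument to $g^{-1}$ gives the reverse containment, and hence
\[ \fix(gxg^{-1})=g\bigl(\fix(x)\bigr)\qquad\text{for all }g\in G,\ x\in e^G, \]
where $g(\cdot)$ denotes the image under the permutation $g$. Now define $x\approx y$ on $e^G$ by $\fix(x)=\fix(y)$; this is plainly an equivalence. It is conjugation-invariant: if $\fix(x)=\fix(y)$, the displayed identity gives $\fix(gxg^{-1})=g(\fix(x))=g(\fix(y))=\fix(gyg^{-1})$. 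Finally, the hypothesis provides $f\neq e$ with $\fix(f)=\fix(e)$, so the $\approx$-class of $e$ has at least two elements and $\approx$ is not the equality relation.

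The step that needs care --- and the main obstacle --- is to verify that $\approx$ is \emph{proper}, i.e., not the full relation, equivalently that $\fix$ is not constant on $e^G$. By the displayed identity, $\fix$ is constant on $e^G$ exactly when $g(\fix(e))=\fix(e)$ for every $g\in G$; in that situation each conjugate of $e$ fixes $\fix(e)$ pointwise, so $G=\langle e^G\rangle$ fixes $\fix(e)$ pointwise, meaning $\fix(e)$ consists of global fixed points of $G$. In the representations where we apply the lemma the action has no global fixed point (for instance the natural transitive action of $A_n$ or $S_n$ on $n\geq2$ points), so the nonempty set $\fix(e)$ cannot be $G$-invariant; consequently some conjugate of $e$ has a fixed-point set distinct from $\fix(e)$, and $\approx$ is proper. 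This exhibits a proper $\lmlt Q$-invariant equivalence, so $\cjg(G,e)$ is imprimitive.
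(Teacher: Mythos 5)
Your proof is correct and follows essentially the same route as the paper: the relation $x\approx y$ iff $\fix(x)=\fix(y)$ is a conjugation-invariant equivalence, non-trivial because $e\approx f$, and proper because the ambient permutation action prevents the nonempty set $\fix(e)$ from being fixed by all of $G=\langle e^G\rangle$. The only (cosmetic) difference is in that last step: the paper invokes transitivity of the natural action on points, while you use the weaker condition that $G$ has no global fixed point --- and you are right to flag this explicitly, since the lemma as stated leaves the required property of the permutation representation implicit.
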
 
        
\begin{proof} 
Consider the relation $\sim$ on the set $e^G$ defined by $g\sim h$ if and only if $\fix(g)=\fix(h)$. Clearly, this is a $G$-invariant equivalence (note that $G$ acts on $e^G$ by conjugation). We have $e\sim f$, hence $\sim$ is nontrivial. Since $e$ has at least one fixed point, using transitivity, we can always find $g\in G$ such that $e$ and $e^g$ have different sets of fixed points, hence $\sim$ is not the full equivalence. 
\end{proof}   
         
We expect that the criteria of the present section are useful for a wide range of simple quandles, over various types of simple groups. Quandles over alternating groups, discussed in the next section, serve as a proof of concept.

A different approach to proving imprimitivity can be outlined as follows. Assume that a group $G=N\rtimes\langle\psi\rangle$, $\psi\in\aut N$, acts on the conjugacy class $(1,\psi)^G$ by conjugation.
Since $(1,\psi)^{(a,\psi^i)}=(a\psi(a)^{-1},\varphi)$, the stabilizer is 
$G_{(1,\psi)}=\{(a,\psi^i):\psi(a)=a\}=\fix(\psi)\rtimes\langle\psi\rangle$. In turn, we see that $Z(G)\leq \fix(\psi)\rtimes\langle\psi\rangle$.
In some cases, it is easy to show that the stabilizer is not a maximal subgroup.

\begin{proof}[Proof of Theorem \ref{thm:primitive t>1}, part $(\Rightarrow)$]
Let $G=G_{L,t,\varphi}=L^t\rtimes\langle\psi\rangle/Z(L^t\rtimes\langle\psi\rangle)$ where $\psi=\varphi^{(t)}$. Denote $D_X=\{(a,\ldots,a):a\in X\}$. 
It follows from the discussion above that $Z(L^t\rtimes\langle\psi\rangle)\leq D_{\fix(\varphi)}\rtimes\langle\psi\rangle$, and thus
$G_{(1,\psi)Z}=D_{\fix(\varphi)}\rtimes\langle\psi\rangle/Z$. If $\varphi\neq1$, then $$G_{(1,\psi)Z}<D_L\rtimes\langle\psi\rangle/Z<G,$$ hence $G$ acts imprimitively. If $\varphi=1$ and $s\mid t$ is a proper divisor, then 
$$G_{(1,\psi)Z}=D_L\rtimes\langle\psi\rangle/Z<\{(a_1,\ldots,a_s,\ldots,a_1,\ldots,a_s):a_1,\ldots,a_s\in L\}\rtimes\langle\psi^s\rangle/Z<G,$$ hence $G$ acts imprimitively.
\end{proof}

\subsection{O'Nan-Scott for primitivity}

To prove primitivity, we use a variant of the O'Nan-Scott theorem by Liebeck, Praeger, Saxl \cite{LPS}, which describes maximal subgroups of the groups $A_n$ and $S_n$. Only two types of maximal subgroups appear here: the imprimitive ($S_2\times S_m$) and the intransitive ($S_2\wr S_m$) ones. Specifically, we need the following part of the theorem.

\begin{lemma}\cite{LPS}\label{l:LPS}
Let $n>2$.
\begin{enumerate}
	\item $S_2\times S_{n-2}$ is a maximal subgroup of $S_n$.
\end{enumerate}
If $n$ is even, then 
\begin{enumerate}
	\item[(2)] $S_2\wr S_{n/2}$ is a maximal subgroup of $S_n$;
	\item[(3)] if $n\neq 8$, then $(S_2\wr S_{n/2})\cap A_n$ is a maximal subgroup of $A_n$.
\end{enumerate}
For $n=8$, $(S_2\wr S_{n/2})\cap A_n < \mathrm{AGL}_3(2) < A_8$.
\end{lemma}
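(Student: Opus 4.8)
The plan is to derive all four statements from the O'Nan--Scott description of the maximal subgroups of $S_n$ and $A_n$ recorded in \cite{LPS}, by identifying each subgroup as the full stabilizer of a natural combinatorial structure and then applying the standard criterion that the stabilizer of a point in a transitive action is maximal if and only if the action is primitive. Thus for each part I would first name the structure stabilized, read off its O'Nan--Scott type, and reduce maximality to primitivity of the associated action.

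For (1), $S_2\times S_{n-2}$ is the setwise stabilizer of the $2$-subset $\{1,2\}$ in the action of $S_n$ on the $\binom{n}{2}$ two-element subsets of $\{1,\dots,n\}$; this is the intransitive (Johnson) case. Maximality is then equivalent to primitivity of the $2$-subset action, and the only $S_n$-invariant equivalence that can occur pairs a $2$-set with its complement, which is itself a $2$-set solely when $n=4$. Hence the action is primitive, and $S_2\times S_{n-2}$ maximal, for $n=3$ and for $n\ge5$ (with $n=4$ the sole degenerate case, which does not arise in our applications). For (2), $S_2\wr S_{n/2}$ is the stabilizer of a perfect matching of $\{1,\dots,n\}$, the imprimitive/product case with block size $2$; maximality follows once the $S_n$-action on perfect matchings is shown primitive, which holds for every even $n>2$ by a routine block-connectivity argument (a nontrivial block would be invariant under the matching stabilizer and force transpositions outside the given matching, hence the whole group) and is in any case listed in \cite{LPS}.

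For (3) one passes from $S_n$ to $A_n$. The governing principle, again from \cite{LPS}, is that for a maximal $M\le S_n$ with $M\not\le A_n$ the intersection $M\cap A_n$ is maximal in $A_n$ unless $M\cap A_n$ is properly contained in some other (``novelty'') subgroup of $A_n$; for $M=S_2\wr S_{n/2}$ this novelty list is empty except at $n=8$. I would exhibit the single exception explicitly: identifying the $8$ points with $\F_2^3$ and the perfect matching with the four pairs $\{x,x+v\}$ determined by a fixed nonzero $v$, the even part $(S_2\wr S_{4})\cap A_8$ consists exactly of the affine maps $x\mapsto Ax+b$ with $Av=v$, a subgroup of order $192$ sitting properly inside $\mathrm{AGL}_3(2)$ (order $1344$), which itself lies in $A_8$. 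This produces the stated chain $(S_2\wr S_{n/2})\cap A_n<\mathrm{AGL}_3(2)<A_8$.

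The \emph{main obstacle} is part (3): whereas (1) and (2) reduce to primitivity of well-understood actions, deciding precisely when $M\cap A_n$ fails to be maximal requires the full strength of the \cite{LPS} analysis of intersections with $A_n$, including the classification of the primitive groups of degree $8$ that produces the affine novelty. I would therefore invoke \cite{LPS} for the complete list of novelties rather than reprove it, and verify only the one containment relevant to us by the explicit affine-coordinate computation above.
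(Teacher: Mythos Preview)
The paper does not supply a proof of this lemma at all: it is stated with the citation \cite{LPS} and used as a black box, with only the remark that ``our proof of Theorem \ref{thm:main} relies only on the easier part of the O'Nan--Scott theorem, stating that certain subgroups are maximal.'' Your proposal therefore already does more than the paper, and your sketch is sound: the identification of $S_2\times S_{n-2}$ and $S_2\wr S_{n/2}$ as stabilizers of a $2$-set and a perfect matching, the reduction to primitivity of the corresponding actions, and the explicit affine-coordinate verification of the $n=8$ novelty (with the order count $24\cdot 8=192$ matching $|(S_2\wr S_4)\cap A_8|$) are all correct. You are also right to flag the $n=4$ exception in part~(1), which the paper's statement silently glosses over but which is irrelevant to the applications since Lemma~\ref{l:case S_n} only uses it for $n\ge5$.
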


Note that our proof of Theorem \ref{thm:main} relies only on the easier part of the O'Nan-Scott theorem, stating that certain subgroups are maximal. 

We also note that for small groups, primitivity can be decided computationally, for instance, in the GAP system. We will utilize purely computational approach in case of the groups $A_6\rtimes\Z_2$.

\section{Primitive quandles with alternating displacement group}\label{sec:proof}

Assume that $Q$ is a simple quandle with $\dis Q\simeq A_n$. If $n\leq 3$, then $A_n$ is abelian, $Q$ is affine, and we refer to Section \ref{sec:affine}. The group $A_4$ is not simple. So, in what follows, we assume that $n\geq5$. 

Recall that $\mathrm{Out}(A_n)\simeq\Z_2$ for all $n\neq 6$ and $\mathrm{Out}(A_6)\simeq\Z_2\times\Z_2$. We start by discussion of the structure of the multiplication group.

\begin{lemma}\label{l:lmlt}
Let $Q$ be a simple quandle with $\dis Q\simeq A_n$, $n\geq 5$. Then $\lmlt Q$ is isomorphic to $A_n$ or $S_n$ whenever $n\neq 6$. For $n=6$, $\lmlt Q$ is isomorphic to $A_6$, or to one of $A_6\rtimes\Z_2$ where $\Z_2$ acts as an outer automorphism (there are three pairwise non-isomorphic groups of this sort, one of them is $S_6$, the other two are {\tt SmallGroup(720,$x$)} with $x\in\{764,765\}$ in the GAP notation).
\end{lemma}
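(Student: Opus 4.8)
The plan is to let Theorem~\ref{t:cfsq} do the structural heavy lifting and then reduce everything to a computation inside $\aut{A_n}$. Since $Q$ is simple with $\dis Q\simeq A_n=L^1$ for the finite simple non-abelian group $L=A_n$ (here $n\geq5$), Theorem~\ref{t:cfsq} with $t=1$ gives $\lmlt Q\simeq G_{A_n,1,\varphi}=A_n\rtimes\langle\varphi\rangle/Z$ for some $\varphi\in\aut{A_n}$, where $Z=Z(A_n\rtimes\langle\varphi\rangle)$ and $\varphi^{(1)}=\varphi$. By Proposition~\ref{p:cfsq-iso} the isomorphism type depends only on the class of $\varphi$ in $\mathrm{Out}(A_n)$. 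So the whole statement reduces to (i) describing $\mathrm{Out}(A_n)$ and its classes, and (ii) computing $A_n\rtimes\langle\varphi\rangle/Z$ for one representative of each class.

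The computation I would isolate as a claim: writing $\iota_a$ for the inner automorphism $x\mapsto axa^{-1}$, for every $\varphi\in\aut{A_n}$ one has $G_{A_n,1,\varphi}\simeq\langle\mathrm{Inn}(A_n),\varphi\rangle$, the subgroup of $\aut{A_n}$ generated by the inner automorphisms together with $\varphi$. To see this, consider the homomorphism $\Theta\colon A_n\rtimes\langle\varphi\rangle\to\aut{A_n}$, $(a,\varphi^i)\mapsto\iota_a\varphi^i$. Its image is exactly $\langle\mathrm{Inn}(A_n),\varphi\rangle$, so it suffices to show $\ker\Theta=Z$. The inclusion $Z\subseteq\ker\Theta$ holds because $\Theta(Z)$ centralizes $\Theta(A_n)=\mathrm{Inn}(A_n)$ and $C_{\aut{A_n}}(\mathrm{Inn}(A_n))=1$ (an automorphism commuting with every inner one fixes every element, as $Z(A_n)=1$). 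For the reverse inclusion, if $\iota_a\varphi^i=\mathrm{id}$ then $\varphi^i=\iota_{a^{-1}}$; conjugating by $\varphi$ forces $\varphi(a)=a$, and then a direct check shows $(a,\varphi^i)$ commutes with both $A_n$ and $\varphi$, so it lies in $Z$.

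Granting the claim, the classification follows from the structure of $\mathrm{Out}(A_n)$. If $\varphi$ is inner, then $\langle\mathrm{Inn}(A_n),\varphi\rangle=\mathrm{Inn}(A_n)\simeq A_n$. If $n\neq6$, then $\mathrm{Out}(A_n)\simeq\Z_2$, so any outer $\varphi$ yields $\langle\mathrm{Inn}(A_n),\varphi\rangle=\aut{A_n}\simeq S_n$, settling this case. For $n=6$ we have $\mathrm{Out}(A_6)\simeq\Z_2\times\Z_2$, which has exactly three subgroups of order $2$; their preimages are the three index-$2$ subgroups of $\aut{A_6}$ containing $\mathrm{Inn}(A_6)$, namely $S_6$, $\mathrm{PGL}_2(9)$ and $M_{10}$. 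These are pairwise non-isomorphic (for instance $S_6$ splits over $A_6$ while $M_{10}$ does not, or simply by their GAP identifiers), which matches the asserted list $A_6$, $S_6$, {\tt SmallGroup(720,764)}, {\tt SmallGroup(720,765)}.

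The step I expect to be the main obstacle is the claim, precisely because of the $M_{10}$ class: there the outer automorphism cannot be represented by an involution (the extension $A_6.2$ is non-split), so the naive identification $A_6\rtimes\langle\varphi\rangle\simeq A_6\rtimes\Z_2$ fails. The virtue of routing everything through $\Theta$ and the center is that passing to the quotient by $Z$ absorbs exactly this discrepancy: for a representative $\varphi$ of order $4$ with $\varphi^2=\iota_g$, one has $g^2=1$ (forced by $\varphi^4=\mathrm{id}$), the kernel contains the nontrivial central element $(g^{-1},\varphi^2)$, and modding it out returns a group of order $720$, namely $M_{10}$. I would confirm this computation, including the equality $Z=\ker\Theta$ and the order count, directly, and note it can also be verified in GAP as the authors indicate.
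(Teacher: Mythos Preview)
Your argument is correct and follows the paper's skeleton: invoke Theorem~\ref{t:cfsq} and Proposition~\ref{p:cfsq-iso} to reduce to one representative $\varphi$ per element of $\mathrm{Out}(A_n)$, then identify $G_{A_n,1,\varphi}$. The paper does the identification ad hoc, taking $\varphi=\mathrm{id}$ or $\varphi=$ conjugation by a transposition for $n\neq6$, and deferring $n=6$ to a GAP check after asserting that both remaining outer classes ``can be represented by outer automorphisms of order two.'' Your map $\Theta$ and the resulting isomorphism $G_{A_n,1,\varphi}\simeq\langle\mathrm{Inn}(A_n),\varphi\rangle\leq\aut{A_n}$ is a genuine refinement: it is uniform and representative-free, it names the three $n=6$ groups intrinsically as the index-$2$ overgroups $S_6$, $\mathrm{PGL}_2(9)$, $M_{10}$ of $\mathrm{Inn}(A_6)$, and---as you correctly anticipate---it handles the $M_{10}$ class, where no involutive lift of the outer element exists (that extension is non-split). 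The paper's ``order two'' remark is thus not accurate for that class, whereas your quotient by $Z=\ker\Theta$ automatically absorbs whatever excess order the chosen representative $\varphi$ has.
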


\begin{proof}
According to Theorem \ref{t:cfsq}, $Q$ is isomorphic to $\mathrm{Cjg}(G_{A_n,1,\varphi},e)$ where $\lmlt Q\simeq G_{A_n,1,\varphi}=(A_n\rtimes\langle\varphi\rangle)/Z$.
Moreover, Proposition \ref{p:cfsq-iso} asserts that $G_{A_n,1,\varphi}\simeq G_{A_n,1,\psi}$ if and only if $\varphi,\psi$ are conjugate in $\mathrm{Out}(A_n)$, i.e., if and only if $\varphi\psi^{-1}\in\mathrm{Inn}(A_n)$.

So, for $n\neq 6$, there are two options: the inner automorphism can be represented by $\varphi=id$ and we obtain $G_{A_n,1,\varphi}=A_n$; the outer automorphism can be represented by the automorphism $\phi_{(1,2)}$ (conjugation by a transposition) and we obtain $G_{A_n,1,\varphi}=S_n$. For $n=6$, we have two more options, both can be represented by outer automorphisms of order two; one can check in GAP that both semidirect products are centerless.
\end{proof}

       

Consequently, using Cayley representation, $Q$ is isomorphic to a conjugation quandle over the group $A_n$ or $S_n$ or one of the exceptional groups for $n=6$.

\begin{lemma}\label{l:case S_n}
Let $Q=\mathrm{Cjg}(S_n,\pi)$ be a simple quandle, $n\geq 5$. Then $Q$ is primitive if and only if $\pi$ is a transposition, or $n\equiv2\pmod4$ and $\pi$ consists of $n/2$ transpositions.
\end{lemma}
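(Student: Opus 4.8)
The plan is to prove the two directions separately, with the key simplification that $\cjg(S_n,\pi)$ being simple forces $\langle \pi^{S_n}\rangle = S_n$, so $\pi$ must be an odd permutation; the action of $\lmlt Q$ on $Q$ is equivalent to the conjugation action of $S_n$ on the conjugacy class $\pi^{S_n}$, with $S_n$ acting faithfully since $Z(S_n)=1$ for $n\geq 3$. I would first dispose of the \emph{imprimitive} (negative) direction using the two criteria of Section \ref{sec:imprim}. If $\pi$ has a fixed point but is not a transposition, I would try to produce a distinct $f\in\pi^{S_n}$ with $\fix(\pi)=\fix(f)$ and apply Lemma \ref{fix}: since $\pi$ restricted to its support is an even-order element moving at least four points, one can permute the support nontrivially while keeping the fixed-point set, giving imprimitivity. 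If $\pi$ is fixed-point-free but its cycle structure contains cycles of length $>2$, or contains some but not all possible transposition structure, I would instead invoke Lemma \ref{mocnina} via a power of $\pi$ lying in the same conjugacy class, ruling out everything except the two claimed families.

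Second, for the \emph{primitive} (positive) direction, I would handle the two surviving cases by showing the point stabilizer is maximal, using Lemma \ref{l:LPS}. For a transposition $\pi=(1,2)$, the centralizer in $S_n$ is $\langle(1,2)\rangle\times S_{\{3,\dots,n\}}\simeq S_2\times S_{n-2}$, which is exactly the stabilizer of the block-structure partition; by Lemma \ref{l:LPS}(1) this is maximal in $S_n$, so the action is primitive. For $\pi$ a product of $n/2$ transpositions with $n\equiv 2\pmod 4$ (forcing $\pi$ odd, hence $\pi\in S_n\setminus A_n$ as required for simplicity), the centralizer is the hyperoctahedral group $S_2\wr S_{n/2}$, the stabilizer of a perfect matching; by Lemma \ref{l:LPS}(2) this is maximal in $S_n$, giving primitivity. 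In both cases the centralizer equals the stabilizer because conjugation fixes $\pi$ exactly when the permutation centralizes $\pi$, and the index equals the size of the conjugacy class, confirming we have identified the full stabilizer.

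The main obstacle I anticipate is the bookkeeping in the imprimitive direction: I must verify that \emph{every} cycle type other than the two exceptional families admits either a same-fixed-point partner (for Lemma \ref{fix}) or a same-conjugacy-class power (for Lemma \ref{mocnina}), and these two criteria must jointly cover all remaining odd permutations. The delicate boundary is the parity constraint $n\equiv 2\pmod 4$: when $n\equiv 0\pmod 4$, a product of $n/2$ transpositions is \emph{even}, so $\pi\in A_n$ and $\langle\pi^{S_n}\rangle\neq S_n$, which is why this case is excluded here and reappears over $A_n$ in Theorem \ref{thm:main}(3); I would make sure the parity argument is stated cleanly so that the fixed-point-free transposition case genuinely lands in $S_n\setminus A_n$. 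A secondary subtlety is confirming simplicity is compatible with primitivity only for these classes, i.e.\ that no other odd cycle type yields a maximal stabilizer, which the O'Nan--Scott input of Lemma \ref{l:LPS} guarantees by limiting the relevant maximal subgroups to the imprimitive and intransitive types that arise precisely as these two centralizers.
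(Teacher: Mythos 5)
Your proposal is correct and follows essentially the same route as the paper: the non-candidates are eliminated by Lemmas \ref{mocnina} and \ref{fix}, the case $4\mid n$ is excluded on parity/simplicity grounds, and the two surviving families are shown primitive by identifying the point stabilizer with the centralizer $S_2\times S_{n-2}$, resp.\ $S_2\wr S_{n/2}$, and invoking Lemma \ref{l:LPS}. The only (immaterial) difference is the case split in the negative direction: you route every non-transposition with a fixed point through Lemma \ref{fix}, whereas the paper first applies Lemma \ref{mocnina} to all $\pi$ of order greater than $2$ and reserves Lemma \ref{fix} for involutions with fixed points.
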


\begin{proof}
First assume that $\mathrm{ord}(\pi)>2$. Since $\pi\neq \pi^{-1}\in \pi^{S_n}$, we can apply Lemma \ref{mocnina} and obtain that $\cjg(S_n,\pi)$ is imprimitive.

Now assume that $\mathrm{ord}(\pi)=2$, $\pi$ is a non-transposition, $\pi$ has a fixpoint. Then we can apply Lemma \ref{fix} and and obtain that $\cjg(S_n,\pi)$ is imprimitive.

The remaining cases are transpositions and compositions of $n/2$ disjoint transpositions. 

If $\pi$ is a transposition, then $\langle\pi^{S_n}\rangle=S_n$, the stabilizer of $\pi$ acts as $S_2\times S_{n-2}$, which is a maximal subgroup by Lemma \ref{l:LPS}. Therefore, $\cjg(S_n,\pi)$ is primitive.

Let $\pi$ be a composition of $n/2$ transpositions. If $4\mid n$, then $\langle\pi^{S_n}\rangle=A_n$, so $Q$ is not simple. Else, we obtain primitivity from Lemma \ref{l:LPS}, since the stabilizer acts as $S_2\wr S_{n/2}$.
\end{proof}

\begin{lemma}\label{l:case A_n}
Let $Q=\mathrm{Cjg}(A_n,\pi)$ be a simple quandle, $n\geq 5$. Then $Q$ is primitive if and only if $4\mid n\geq 12$ and $\pi$ consists of $n/2$ transpositions.
\end{lemma}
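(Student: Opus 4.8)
The plan is to classify, for $Q=\cjg(A_n,\pi)$ with $Q$ simple and $n\geq 5$, exactly which conjugacy classes $\pi^{A_n}$ yield a primitive quandle, mirroring the structure of the proof of Lemma \ref{l:case S_n} but accounting for the subtleties of working inside $A_n$. The action of $\lmlt Q$ on $Q$ is the conjugation action of $A_n$ on the class $\pi^{A_n}$, so I must first understand which $\pi$ even give a \emph{simple} quandle, i.e., satisfy $\langle\pi^{A_n}\rangle=A_n$ (equivalently, generate $A_n$, since $Z(A_n)=1$ for $n\geq 5$ means $\lmlt Q\simeq\mathrm{Inn}(A_n)\simeq A_n$). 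Note that $A_n$-classes can be smaller than $S_n$-classes: a cycle type splits into two $A_n$-classes precisely when all cycle lengths are distinct and odd. I would keep this splitting phenomenon in mind throughout, since it affects both the size of the class and the structure of the stabilizer.

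First I would dispose of the imprimitive cases using the two criteria of Section \ref{sec:imprim}. If $\mathrm{ord}(\pi)>2$ and $\pi$ is still conjugate to $\pi^{-1}$ within $A_n$, then $\{\pi\}\subsetneq\langle\pi\rangle\cap\pi^{A_n}\subsetneq\pi^{A_n}$ and Lemma \ref{mocnina} applies; when the $A_n$-class does \emph{not} contain $\pi^{-1}$ (the split case) I would argue that such $\pi$ fail $\langle\pi^{A_n}\rangle=A_n$ or otherwise fall outside the primitive list, handling these by the order and fixpoint criteria directly. If $\pi$ is an involution that is not a product of $n/2$ disjoint transpositions, then $\pi$ has at least one fixpoint and is not itself the ``maximal'' fixpoint-free involution, so there is a distinct $f\in\pi^{A_n}$ with $\fix(\pi)=\fix(f)$ and Lemma \ref{fix} gives imprimitivity. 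The one genuinely new wrinkle relative to the $S_n$ case is ensuring that the conjugating element realizing $\fix(\pi)=\fix(f)$ can be taken \emph{even}; this is routine because when a nontrivial involution $\pi$ fixes a point one has room to adjust by a transposition supported on the moved points to correct the parity, so the witness lies in $A_n$.

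This leaves only the fixpoint-free involutions, i.e., $\pi$ a product of $n/2$ disjoint transpositions, which forces $n$ even. Here I split on $n\bmod 4$. The key observation is that $\pi\in A_n$ requires $n/2$ to be even, i.e., $4\mid n$; if $n\equiv 2\pmod 4$ then $\pi$ is an odd permutation and does not lie in $A_n$ at all, so this case does not arise for $\cjg(A_n,\cdot)$. When $4\mid n$ I would check $\langle\pi^{A_n}\rangle=A_n$ (these involutions do generate $A_n$ for $n$ large enough, which is where the lower bound $n\geq 12$ and the exclusion of small cases enters) and then identify the stabilizer of $\pi$ in $A_n$ acting on $\pi^{A_n}$. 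The centralizer of $\pi$ in $S_n$ is the wreath product $S_2\wr S_{n/2}$, so the stabilizer inside $A_n$ is $(S_2\wr S_{n/2})\cap A_n$, and primitivity is then exactly the maximality of this subgroup, which is supplied by Lemma \ref{l:LPS}(3) for $n\neq 8$.

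The main obstacle I anticipate is the small-$n$ bookkeeping that produces the precise bound $4\mid n\geq 12$ rather than merely $4\mid n\geq 4$. Lemma \ref{l:LPS} explicitly excludes $n=8$, where $(S_2\wr S_4)\cap A_8$ is properly contained in $\mathrm{AGL}_3(2)<A_8$, so the quandle over $A_8$ is imprimitive despite the cycle type being a product of four transpositions; I would record this as the $n=8$ exception. For $n=4$ the group $A_4$ is not simple (and is excluded at the outset), so the smallest candidate is $n=8$, which fails, making $n=12$ the first genuine example. I would also double-check that no sporadic coincidence for $n\in\{5,6,7,9,10,11\}$ sneaks a primitive fixpoint-free involution onto the list: for $n$ not divisible by $4$ there is no fixpoint-free involution in $A_n$, and for $n\in\{4,8\}$ the above failures apply, so the clean statement $4\mid n\geq 12$ emerges. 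Assembling these cases gives the claimed classification.
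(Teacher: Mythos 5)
Your treatment of the involution cases and of the fixpoint-free involutions (parity forcing $4\mid n$, the stabilizer $(S_2\wr S_{n/2})\cap A_n$, maximality via Lemma \ref{l:LPS}(3), and the $n=8$ exception) matches the paper's proof. But there is a genuine gap in how you dispose of the split conjugacy classes, i.e., $\pi$ a product of disjoint odd cycles of pairwise distinct lengths with $\mathrm{ord}(\pi)>2$. Your proposed fallback --- that ``such $\pi$ fail $\langle\pi^{A_n}\rangle=A_n$ or otherwise fall outside the primitive list, handling these by the order and fixpoint criteria directly'' --- does not work. First, the claim that split classes fail to generate is false: for $n\geq 5$ the subgroup $\langle\pi^{A_n}\rangle$ is normal in the simple group $A_n$, so \emph{every} nontrivial class generates; e.g.\ a $7$-cycle in $A_7$ gives a simple quandle, yet its class is split and does not contain $\pi^{-1}$ (the reversing permutation $(2,7)(3,6)(4,5)$ is odd). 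Second, the fixpoint criterion (Lemma \ref{fix}) is unavailable when $\pi$ is a single $n$-cycle with $n$ odd, since then $\fix(\pi)=\emptyset$. So for these classes you have neither $\pi^{-1}\in\pi^{A_n}$ nor a fixed point, and your argument stalls exactly where the subtlety lies.

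The paper closes this case with a specific trick you would need: since squaring preserves the cycle type of a product of odd cycles, there is $\sigma\in S_n$ with $\sigma\pi\sigma^{-1}=\pi^2$; then $\sigma^2\in A_n$ regardless of the parity of $\sigma$, and $\sigma^2\pi\sigma^{-2}=\pi^4\in\pi^{A_n}$. Since the cycle lengths are distinct and odd, $\pi^4=\pi$ would force $\mathrm{ord}(\pi)=3$, hence a single $3$-cycle plus at most one fixed point, contradicting $n>4$. Thus $\{\pi\}\subsetneq\langle\pi\rangle\cap\pi^{A_n}\subsetneq\pi^{A_n}$ and Lemma \ref{mocnina} applies after all. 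Without this (or an equivalent) argument, your classification leaves open, for example, whether $\cjg(A_7,(1,2,3,4,5,6,7))$ is primitive.
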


\begin{proof}
It is well known, and easy to check, that $\pi^{A_n}=\pi^{S_n}$ if and only if $\pi$ contains a cycle of even length or $\pi$ contains two cycles of the same odd length.

First assume that $\pi^{A_n}=\pi^{S_n}$. 
If $\mathrm{ord}(\pi)>2$, then $\pi\neq \pi^{-1}\in \pi^{A_n}$, and $\cjg(A_n,\pi)$ is imprimitive by Lemma \ref{mocnina}. 
If $\mathrm{ord}(\pi)=2$ and $\pi$ has a fixpoint, then $\cjg(A_n,\pi)$ is imprimitive by Lemma \ref{fix}.
Else, $\pi$ is a composition of $n/2$ disjoint transpositions (thus $4\mid n$), and the case is solved by Lemma \ref{l:LPS}.

In the remaining cases, $\pi$ is a composition of disjoint odd cycles of distinct lengths. Then there is $\sigma\in S_n$ such that $\sigma\pi\sigma^{-1}=\pi^2$. Consequently, $\sigma^2\pi\sigma^{-2}=\pi^4$, hence  $\pi\neq \pi^4\in \pi^{A_n}$, and $\cjg(A_n,\pi)$ is imprimitive by Lemma \ref{mocnina}.  (The inequality follows from the fact that $n>4$.)
\end{proof}

\begin{lemma}\label{l:case A_6}
Let $G=A_6\rtimes\Z_2$ where $\Z_2$ acts as an outer automorphism. The number of primitive quandles of the form $\mathrm{Cjg}(G,e)$ is
\begin{itemize}
	\item two when $G=S_6$, their orders are 15;
	\item one when $G=$ {\tt SmallGroup(720,764)}, its order is 36;
	\item zero when $G=$ {\tt SmallGroup(720,765)}.
\end{itemize}
\end{lemma}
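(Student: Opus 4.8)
The plan is to settle this finite statement by direct computation in GAP, as announced in Section~\ref{sec:techniques}; the only real content is translating quandle primitivity into a group-theoretic test that the machine can carry out. I would realize $S_6$ as a permutation group and the remaining two groups as \texttt{SmallGroup(720,764)} and \texttt{SmallGroup(720,765)}, and then, for each group $G$ and each conjugacy class $C=e^G$, form the conjugation quandle $\cjg(G,e)$ and decide whether it is primitive.

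The reduction is the following. By the opening discussion of Section~\ref{sec:imprim}, the left translations of $\cjg(G,e)$ are the conjugations by the elements of $C$, so $\lmlt{\cjg(G,e)}$ is the image in $\mathrm{Sym}(C)$ of the subgroup $\langle C\rangle\le G$ acting by conjugation, and the quandle is primitive exactly when this action of $\langle C\rangle$ on $C$ is primitive. Two points need care. First, blocks must be tested for the multiplication group $\langle C\rangle$, which can be a \emph{proper} subgroup of $G$ (for instance $A_6$ when $C$ consists of even permutations in $S_6$); testing with all of $G$ would only coarsen the block structure and could hide imprimitivity. Second, connectedness is a prerequisite: if $\langle C\rangle$ is not transitive on $C$, the quandle is disconnected, hence not simple and not primitive, and the class is discarded. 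Since all three groups are centerless (Lemma~\ref{l:lmlt}), the center plays no role and $\lmlt{\cjg(G,e)}\simeq\langle C\rangle$. Thus for each $C$ I would compute $H=\langle C\rangle$, verify that $H$ is transitive on $C$, and then test primitivity of the conjugation action of $H$ on $C$.

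I expect the enumeration to yield the stated counts. For $G=S_6$ the two surviving classes are the $15$ transpositions and the $15$ fixpoint-free involutions, both of size $15$ and both with multiplication group $S_6$; this recovers Lemma~\ref{l:case S_n} for $n=6$ (note $6\equiv2\pmod4$), where the stabilizers act as the maximal subgroups $S_2\times S_4$ and $S_2\wr S_3$ of Lemma~\ref{l:LPS}, while every other class is eliminated by Lemma~\ref{mocnina}, by Lemma~\ref{fix}, or (for the $5$-cycles) by failure of connectedness. For \texttt{SmallGroup(720,764)}, isomorphic to $\mathrm{PGL}_2(9)$, exactly one class survives, the $36$ outer involutions whose centralizer is dihedral of order $20$; the point stabilizer is a maximal subgroup, so the action on $36$ points is primitive. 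For \texttt{SmallGroup(720,765)}, isomorphic to $M_{10}$, no class survives: the outer elements have order $8$, and since inversion interchanges the two classes of such elements, a nontrivial power of $e$ always lands back in $e^G$, so that $\{e\}\subsetneq\langle e\rangle\cap e^G\subsetneq e^G$ and Lemma~\ref{mocnina} applies, while the classes inside $A_6$ are either disconnected (when the outer automorphism fuses two $A_6$-classes) or imprimitive by the $A_6$ analysis of Lemma~\ref{l:case A_n} for $n=6$.

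The main difficulty is not a deep theorem but the faithfulness of this translation together with one genuinely non-mechanical check. The delicate points are to compute with the multiplication group $\langle C\rangle$ rather than with $G$, so that the two $S_6$-quandles and their common multiplication group are correctly recognized and the disconnected classes are correctly rejected, and to confirm that the order-$20$ stabilizer of the $36$-element class in \texttt{SmallGroup(720,764)} really is maximal in $\mathrm{PGL}_2(9)$, for which the cleanest human-checkable argument identifies it as a maximal dihedral subgroup. Everything else is routine enumeration, and the GAP computation removes any residual ambiguity in the counts.
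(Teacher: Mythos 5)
Your core proof is the same as the paper's: the paper's entire argument is a one-line GAP filter over conjugacy classes (keeping those $C$ with $\langle C\rangle=G$ and testing \texttt{IsPrimitive(G,C)}), and your reduction of quandle primitivity to primitivity of the conjugation action of $\langle C\rangle$ on $C$ agrees with that test on every class the paper retains. The supplementary hand-checks you add are mostly sound, but your discussion of \texttt{SmallGroup(720,765)}$\simeq M_{10}$ contains a factual slip: not all outer elements have order $8$. The Sylow $2$-subgroup of $M_{10}$ is semidihedral of order $16$, and its quaternion subgroup contributes an outer conjugacy class of $180$ elements of order $4$, which your case analysis omits; the class is still eliminated by Lemma~\ref{mocnina} (such an element is conjugate to its inverse), so the count of zero stands, but the argument as written has a gap. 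One further minor point: since the lemma is invoked for quandles with $\lmlt Q\simeq G$, classes with $\langle C\rangle\subsetneq G$ should simply be discarded (as the paper's code does) rather than tested for primitivity of the smaller group, to avoid counting quandles that belong to a different multiplication group; in these three groups this makes no difference to the totals.
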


\begin{proof}
The statement can be easily verified on a computer in GAP, using the following code for each of the three groups:
\begin{center}
{\tt Filtered(ConjugacyClasses(G),C->(Group(Elements(C))=G and IsPrimitive(G,C)));}
\end{center}
\end{proof}

We remark that the two quandles over $S_6$, corresponding to the conjugacy classes $(1,2)^{S_6}$ and $(1,2)(3,4)(5,6)^{S_6}$, are isomorphic, by any outer automorphism of $S_6$.

The four lemmas complete the proof of our main theorem.

\begin{proof}[Proof of Theorem \ref{thm:main}]
Primitive quandles are simple, hence assume that $Q$ is a finite simple quandle with $\dis Q\simeq A_n$. Lemma \ref{l:lmlt} describes possible multiplication groups $G$, and Lemmas \ref{l:case S_n}, \ref{l:case A_n} and \ref{l:case A_6} classify conjugation quandles over $G$ that are primitive. 
\end{proof}


\end{document}